\newcommand{\cptwo}{\C\textup{P}^2}
\newcommand{\N}{\mathbb N}
\newcommand{\Z}{\mathbb Z}
\newcommand{\T}{\mathcal T}
\newcommand{\C}{\mathbb C}
\renewcommand{\phi}{\varphi}
\newcommand{\id}{\operatorname{id}}
\newcommand{\spinc}{\ifmmode{\operatorname{Spin}^c}\else{$\operatorname{spin}^c$\ }\fi}
\newcommand{\diff}{\operatorname{Diff}}          
\newtheorem{theorem}{Theorem}[section]
\newtheorem{lemma}[theorem]{Lemma}
\newtheorem{proposition}[theorem]{Proposition}
\newtheorem{corollary}[theorem]{Corollary}
\newtheorem{thm}{Theorem}
\theoremstyle{definition}
\newtheorem{definition}[theorem]{Definition}
\newtheorem{example}[theorem]{Example}
\title{Absolutely exotic compact $4$-manifolds}
\author[Selman Akbulut]{Selman Akbulut${}^1$}
\address{Department of Mathematics, \newline\indent Michigan State 
University \newline\indent East Lansing, MI 48824}
\email{akbulut@math.msu.edu}
\author[Daniel Ruberman]{Daniel Ruberman${}^2$}
\address{Department of Mathematics, MS 050\newline\indent Brandeis
University \newline\indent Waltham, MA 02454}
\email{ruberman@brandeis.edu}
\thanks{\noindent ${}^1$Partially supported by NSF grant DMS 0905917 and NSF FRG Grant 1065827.  
${}^2$Partially supported by NSF Grant 1105234 and NSF FRG Grant 1065827
}
\begin{document}
\begin{abstract}
We show how to construct absolutely exotic smooth structures on compact $4$-manifolds with boundary, including contractible manifolds. In particular, we prove that any compact smooth 4-manifold W with boundary that admits a relatively exotic structure contains a pair of codimension-zero submanifolds homotopy equivalent to W that are absolutely exotic copies of each other.  In this context, {\em absolute} means that the exotic structure is not relative to a particular parameterization of the boundary. Our examples are constructed by modifying a relatively exotic manifold by adding an invertible homology cobordism along its boundary. Applying this technique to corks (contractible manifolds with a diffeomorphism of the boundary that does not extend to a diffeomorphism of the interior) gives examples of absolutely exotic smooth structures on contractible 4-manifolds.  \end{abstract}
\maketitle

\vspace{-.2in}

\section{Introduction}
One goal of $4$-dimensional topology is to find exotic smooth structures on the simplest of closed $4$-manifolds, such as $S^4$ and $\cptwo$.  Amongst manifolds with boundary, there are very simple exotic structures coming from the phenomenon of corks,  which are {\em relatively} exotic contractible manifolds discovered by the first-named author~\cite{akbulut:contractible}. More specifically, a cork is a compact smooth contractible manifold $W$ together with a diffeomorphism $f: \partial W \to \partial W$ which does not extend to a self-diffeomorphism of $W$, although it does extend to a self-homeomorphism $F: W\to W$.  This gives an exotic smooth structure on $W$ relative to its boundary, namely the pullback smooth structure by F.  This smooth structure is not {\em absolute}, in the sense that  it is diffeomorphic to  $W$ if we don't fix the identification of the boundary.  We will explain this distinction more precisely in Section~\ref{S:relative}.  

In this paper we construct absolutely exotic smoothings of compact $4$-manifolds with  boundary, from relative exotic smoothings.  

\begin{thm}\label{T:general}
If $W$ is a compact smooth $4$-manifold
and $F:W \to W$ a homeomorphism whose restriction to $M = \partial W$ is a diffeomorphism that does not extend to a self diffeomorphism of $W$.  Then $W$ contains a pair of smooth $4$-manifolds $V$ and $V'$ homotopy equivalent to $W$ with $\partial V \cong \partial V'$, such that $V$ and $V'$ are not diffeomorphic to each other. 
\end{thm}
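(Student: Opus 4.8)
The plan is to attach an invertible homology cobordism to $M=\partial W$ so as to ``rigidify'' the boundary, trading the relative exoticness of $W$ (witnessed by $f$) for an absolute one, and then to use invertibility to put the resulting manifolds back inside $W$ as codimension-zero submanifolds. The engine --- and this is where essentially all the work lies --- is a lemma producing an \emph{invertible homology cobordism} $C$ from $M$ to a closed $3$-manifold $N$: a compact $4$-manifold with $\partial C=(-M)\sqcup N$ inducing isomorphisms on $H_*$ and on $\pi_1$ from each end, admitting a homology cobordism $D$ from $N$ to $M$ with $C\cup_N D\cong M\times[0,1]$ rel $\partial$, and such that $N$ is \emph{rigid}: every orientation-preserving self-diffeomorphism of $N$ is isotopic to $\id_N$, and $N$ carries no orientation-reversing self-diffeomorphism. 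To prove the lemma I would build $C$ from an explicit surgery/handle presentation, arranging $N$ to be a hyperbolic $3$-manifold (for instance an imitation of $M$ in Kawauchi's sense) with trivial isometry group, so that Mostow rigidity together with the solution of the Smale conjecture for hyperbolic $3$-manifolds forces $\pi_0\diff(N)=1$, while homology and $\pi_1$ are controlled by Mayer--Vietoris and van Kampen and invertibility is witnessed by an explicit $D$.

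Granting the lemma, fix such a $C$ and realize a collar of $\partial W$ in $W$ as $C\cup_N D$ with $D$ the outer layer (so $\partial_+D=\partial W$); set $V:=W\setminus\operatorname{int}(D)\cong W\cup_M C$. Then $V$ is a codimension-zero submanifold of $W$ with $\partial V=N$, and since $C$ is a homology cobordism inducing $\pi_1$-isomorphisms the inclusion $W\hookrightarrow V$ is a homotopy equivalence. Let $f^*C$ be $C$ with its gluing region $\partial_-C$ reparametrized by $f$; a short computation shows that ${}^fD$ --- the cobordism $D$ with $\partial_+D$ reparametrized by $f$ --- satisfies $f^*C\cup_N{}^fD\cong M\times[0,1]$ rel $\partial$, so $f^*C$ is again invertible rel $\partial$. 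Realizing a second collar of $\partial W$ as $f^*C\cup_N{}^fD$, set $V':=W\setminus\operatorname{int}({}^fD)\cong W\cup_M(f^*C)$. Thus $V,V'\subset W$ are codimension-zero submanifolds, both homotopy equivalent to $W$, with $\partial V\cong N\cong\partial V'$; they are moreover homeomorphic, via a homeomorphism $V\to V'$ built from $F$ and the identity of $C$, well-defined precisely because $F|_M=f$, so $V$ and $V'$ are genuine absolutely exotic copies of one another.

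It remains to see that $V\not\cong V'$. Suppose $\phi\colon V\to V'$ is a diffeomorphism. Because $N$ is chiral, $\phi$ is orientation-preserving, so $\phi|_{\partial V}\colon N\to N$ is an orientation-preserving self-diffeomorphism, hence isotopic to $\id_N$ by rigidity; by the isotopy extension theorem we may isotope $\phi$ so that $\phi|_{\partial V}=\id_N$. Gluing $\phi$ to the identity of ${}^fD$ along $N$ then produces a self-diffeomorphism of $W=V\cup_N D=V'\cup_N{}^fD$, and tracking the reparametrization at the outer end shows it restricts to $f^{-1}$ on $\partial W$ (equivalently, it is a diffeomorphism $W\to W_f:=W\cup_f(M\times[0,1])$ that is the identity on $\partial W$). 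In either formulation $f$ extends to a self-diffeomorphism of $W$, contrary to hypothesis; hence $V$ and $V'$ are not diffeomorphic. The whole argument hinges on the lemma of the first paragraph --- producing the rigid invertible homology cobordism is the main obstacle --- while the collar bookkeeping and the orientation and boundary-identification checks above are routine once it is in hand.
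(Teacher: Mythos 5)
Your blueprint coincides with the paper's: attach an invertible homology cobordism to $M=\partial W$ so that the new boundary $N$ has a controlled symmetry group, set $V = W\cup_M X$ and $V' = W\cup_f X$, use invertibility to re-embed both in $W$, and argue that a diffeomorphism $V\to V'$ restricting to the identity on $N$ would glue with the inverse cobordism to extend $f$ over $W$. Your collar bookkeeping, the use of the topological extension $F$ to produce a homeomorphism $V\to V'$, and the orientation considerations are all sound.

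The genuine gap is the lemma you treat as the engine. You require $N$ to be a hyperbolic $3$-manifold with trivial isometry group (hence $\pi_0\diff(N)=1$ by Gabai et al.\ and no orientation-reversing symmetry), invertibly homology cobordant to $M$ with $\pi_1$ controlled, and you leave this as an unverified appeal to Kawauchi imitation theory. The paper does not prove, and does not need, such a rigidity statement; indeed in Section~\ref{S:infinite} it explicitly notes that the hyperbolic $N$ it knows how to produce via Theorem~\ref{T:invertible} ``may well have a non-trivial symmetry group.'' Instead, the paper builds $N$ as a \emph{toroidal} manifold: glue the hyperbolic, asymmetric exterior of a Paoluzzi--Porti link $L\subset M$ (Corollary~\ref{C:asymmetric}) to copies of the hyperbolic, asymmetric complement of the doubly slice knot $J=11n42$ (Proposition~\ref{P:calc}) along tori, with $X$ as in Lemma~\ref{L:ds}. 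This $N$ has a nontrivial --- in fact infinite --- mapping class group $\bigoplus_i(\Z\oplus\Z)$, generated by Dehn twists on the JSJ tori; the ``Claim'' in the proof is not that $N$ is rigid but that every one of these mapping classes extends over $X$ to a diffeomorphism isotopic to the identity on $M$, which together with Lemma~\ref{L:extend} is exactly what the contradiction needs. So the paper substitutes the weaker, explicitly checkable condition ``every self-diffeomorphism of $N$ extends over $X$ trivially on $M$'' for your condition ``$N$ has trivial mapping class group.'' Your argument would go through if the rigid-invertible-cobordism lemma were established, but as written that is the crucial missing piece, and the paper's construction is designed precisely to avoid having to prove it.
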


By applying Theorem~\ref{T:general} to corks $(W,f)$,  we get absolutely exotic contractible manifolds, it also applies to {\em anti-corks} (which are relatively exotic manifolds homotopy equivalent to $S^1$~\cite{akbulut:zeeman, akbulut:book,akbulut:stable}). Until now the smallest known absolutely exotic manifold with boundary was homotopy equivalent to $S^{2}$, and was constructed as a $4$-ball with a single $2$-handle attached \cite{akbulut1991exotic}.

\begin{thm}\label{T:main}
There are compact contractible smooth $4$-manifolds $V$ and $V'$ with diffeomorphic boundaries, such that they are homeomorphic but not diffeomorphic to each other. Similarly, there are absolutely exotic smooth manifolds which are homotopy equivalent to $S^1$.
\end{thm}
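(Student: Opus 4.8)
The plan is to derive Theorem~\ref{T:main} from Theorem~\ref{T:general} by applying the latter to corks and to anti-corks. Recall that by \cite{akbulut:contractible} there is a cork: a compact contractible smooth $4$-manifold $W$ together with a diffeomorphism $f\colon\partial W\to\partial W$ that does not extend to a self-diffeomorphism of $W$, but (by Freedman's theorem) does extend to a self-homeomorphism $F\colon W\to W$ with $F|_{\partial W}=f$. The pair $(W,F)$ satisfies the hypotheses of Theorem~\ref{T:general}, which therefore produces smooth codimension-zero submanifolds $V,V'\subset W$, each homotopy equivalent to $W$ --- hence contractible --- with $\partial V\cong\partial V'$ and with $V$ not diffeomorphic to $V'$. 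The identical argument applied to an anti-cork --- a compact smooth $4$-manifold $W\simeq S^1$ carrying a boundary diffeomorphism that extends to a self-homeomorphism but not to a self-diffeomorphism \cite{akbulut:zeeman,akbulut:book,akbulut:stable} --- produces smooth $V,V'\simeq S^1$ with $\partial V\cong\partial V'$ that are not diffeomorphic.

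It remains to upgrade the homotopy equivalence $V\simeq V'$ supplied by Theorem~\ref{T:general} to a homeomorphism. The quickest route is to use the construction behind Theorem~\ref{T:general}: $V$ and $V'$ are obtained from $W$, equipped with the two boundary identifications $\id$ and $f$, by attaching a fixed invertible homology cobordism $C$ along $\partial W$, so that the self-homeomorphism $F$ (with $F|_{\partial W}=f$) induces a homeomorphism $F\cup\id_{C}\colon V'\to V$ which is moreover the identity on the common boundary. Alternatively, using only the statement of Theorem~\ref{T:general}: in the contractible case one invokes Freedman's theorem that a compact contractible topological $4$-manifold is determined up to homeomorphism (rel boundary) by its boundary, so that $V\cong V'$; in the $\pi_1=\Z$ case one notes that the construction yields an $h$-cobordism rel boundary from $V$ to $V'$, which --- since $\operatorname{Wh}(\Z)=0$ --- is an $s$-cobordism, and since $\Z$ is a good group in the sense of Freedman--Quinn, the topological $s$-cobordism theorem gives $V\cong V'$. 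Either way, combining the homeomorphism with the non-diffeomorphism from Theorem~\ref{T:general} proves both assertions of Theorem~\ref{T:main}.

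The step I expect to require the most care is this last upgrade: ensuring that the pair $V,V'$ is genuinely homeomorphic rather than merely homotopy equivalent. For the contractible manifolds it is immediate from Freedman's classification, so the real content lies in the $\pi_1=\Z$ case, where one must track the homeomorphism $F$ through the invertible-homology-cobordism construction of Theorem~\ref{T:general} (equivalently, produce the $s$-cobordism rel boundary and appeal to Freedman--Quinn, noting that $\operatorname{Wh}(\Z)=0$ so there is no torsion obstruction). The remainder is a routine combination of Theorem~\ref{T:general} with the known existence of corks and anti-corks.
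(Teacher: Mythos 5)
Your proof takes the same approach as the paper: apply Theorem~\ref{T:general} to a cork or to an anti-cork, and then upgrade the resulting pair of non-diffeomorphic, homotopy-equivalent manifolds to a homeomorphic pair. The paper dispatches this last step by simply citing Freedman's theorem (your second alternative); your first alternative, noting that $F\cup\id$ over the invertible cobordism already furnishes the homeomorphism, is a cleaner and more self-contained way to see the same thing, and is a valid reading of the construction in the proof of Theorem~\ref{T:general}.
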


The cork theorem ~\cite{curtis-freedman-hsiang-stong,matveyev:h-cobordism} implies that $V'$ is obtained from $V$ by a cork-twisting operation in the interior; that is how we will obtain our examples.  

\vspace{.05in}

 In a final section, we will extend the technique to show how the existence of infinitely many relatively exotic contractible $4$-manifolds implies the existence of infinitely many absolutely exotic ones.
\\[2ex]
\textbf{Acknowledgments:} We thank Chuck Livingston and Jeff Meier for a helpful exchange of emails, and Dave Auckly and Nikolai Saveliev for helpful comments.  We particularly appreciate the generous assistance of Nathan Dunfield in helping us with the computer verification of the properties of the knots and manifolds that are used in our construction.

\newpage

\subsection{Smoothings and markings of the boundary}  \label{S:relative}
The material discussed here is standard but we review it to fix our terminology.  
\begin{definition}\label{D:relative}
Let $W^{n+1}$ be a compact topological manifold with boundary, and let $M^n$ be a closed smooth manifold.  A marking of the boundary is a homeomorphism $j: M \to \partial W$.  A smoothing of $W$ relative to the marking $j$ is a smooth structure on $W$, so that $j$ is a diffeomorphism.  Two relative smoothings $(W,j)$ and $(W',j')$ are equivalent (relatively diffeomorphic) if there is a diffeomorphism $F: W \to W'$ with $F\circ j = j'$.
\end{definition}
\vspace*{-2ex}
$$
\xymatrix@R=.2pc {W \ar[rr]^F&& W'\\
\\
\\
\ar[uuu]^{\subseteq} && \ar[uuu]_{\subseteq}\\
\partial W&  \ar[l]_j M \ar[r]^{j'} & \partial W'
}
$$
\vspace*{.5ex}

In the terminology of current $3$-manifold topology~\cite{lipshitz-ozsvath-thurston:bordered-hf}, this notion is described under the name of bordered manifold.  As an example, a smooth structure on $W^{4}$ induces, in a canonical way, a relative smoothing with $M^{3} = \partial W^{4}$ and $j$ the identity.
By composition with $j$, the set of diffeomorphisms of $M$, up to isotopy (more precisely, pseudo-isotopy) acts on the set of relative diffeomorphism classes of manifolds with boundary $M$; if $M = \partial W$, this amounts to replacing $j = \id $ by an arbitrary self-diffeomorphism. Corks are relative smoothings in this sense; the {\em Mazur cork} shown in Figure~\ref{F:mazur} was shown to be relatively exotic (in different terminology) in~\cite{akbulut:contractible,akbulut:zeeman}.

\begin{figure}[htb]
\labellist
\small\hair 2pt
 \pinlabel {$\cong$}  at 250 130
 \pinlabel {$0$}  at 175 257
 \pinlabel {$0$}  at 468 238
 \pinlabel {$\tau$}  at 405 275
\endlabellist
\centering
\includegraphics[scale=0.4]{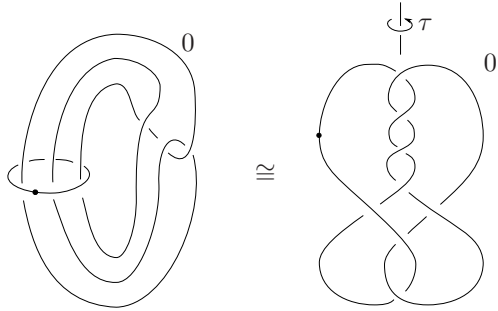}
\caption{The Mazur cork $(W,\tau)$}
\label{F:mazur}
\end{figure}

In contrast, an absolute smoothing of $W$ is just a smooth structure without a marking of the boundary, considered up to diffeomorphism.   If we are given a particular relative (resp.~absolute) smooth structure on $W$, then a relatively (resp.~absolutely) inequivalent smoothing will be referred to as {\em exotic}.  Sometimes there is no distinction between  relative and absolute, as in the following simple lemma.
\begin{lemma}\label{L:extend}
Suppose that every self-diffeomorphism of $\partial W$ extends to a diffeomorphism of $W$. Then the natural forgetful map from relative to absolute smoothings of $W$ is a bijection.
\end{lemma}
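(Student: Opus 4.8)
\noindent The forgetful map sends a relative smoothing $(W,j)$ --- a smooth structure on the topological manifold $W$ together with a marking $j\colon M\to\partial W$ --- to the underlying smooth structure, forgetting $j$, and the claim is that it induces a bijection on equivalence classes. The plan is to dispose of surjectivity quickly and to split the injectivity argument into two pieces: a purely formal step that reduces two relative smoothings with the same image to ones differing only by a re-marking of the boundary, and a second step in which the extension hypothesis is used to absorb that re-marking. In the language introduced just above, this says that on each class of absolute smoothings the action of $\diff(M)$ on relative smoothings has a single orbit (the formal step), and that the hypothesis forces this action to be trivial.

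For surjectivity: given a smooth structure on $W$, its restriction to $\partial W$ is a smooth structure on a closed $3$-manifold, hence --- since a closed $3$-manifold carries a unique smooth structure up to diffeomorphism --- it is diffeomorphic to $M$; any such diffeomorphism is a marking, and the resulting relative smoothing maps to the given absolute one. For the first injectivity step, suppose $(W,j)$ and $(W',j')$ have the same image, so that after forgetting the markings there is a diffeomorphism $F\colon W\to W'$. Then $F^{-1}\circ j'$ is another marking of the source, $F$ exhibits $(W,F^{-1}\circ j')$ as relatively diffeomorphic to $(W',j')$, and the two markings $j$ and $F^{-1}\circ j'$ of $\partial W$ differ by a self-diffeomorphism $\psi=j^{-1}\circ F^{-1}\circ j'$ of $M$; no hypothesis is used here. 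For the second step, given any $\psi\in\diff(M)$ the self-diffeomorphism $j\circ\psi\circ j^{-1}$ of $\partial W$ extends, by hypothesis, to a diffeomorphism $G\colon W\to W$, and $G$ carries the marking $j$ to $j\circ\psi$; hence $(W,j)$ and $(W,j\circ\psi)$ are relatively diffeomorphic, and combined with the first step this yields $(W,j)\sim(W',j')$. Concretely, writing $\phi=(j')^{-1}\circ F|_{\partial W}\circ j$, the composite $F\circ G$ --- with $G$ the extension over $W$ of $j\circ\phi^{-1}\circ j^{-1}$ furnished by the hypothesis --- is the sought relative diffeomorphism.

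There is no deep obstacle: the statement is formal once the hypothesis is available, and the only point that needs care is keeping track of which smooth structure the hypothesis is applied to. It must be invoked for the smooth structure underlying the relative smoothing under consideration, not for a single fixed reference structure; this is harmless, because every smoothing of $W$ restricts to the same smooth structure on $\partial W$ (the unique one, in dimension three), so ``self-diffeomorphism of $\partial W$'' is unambiguous and the hypothesis is properly read as asserting that each such map extends over $W$ whatever its smoothing. In connection with the parenthetical remark about pseudo-isotopy above, one may also note that a boundary diffeomorphism which is (pseudo-)isotopic to the identity extends over $W$ via a collar, independently of the hypothesis, so that the $\diff(M)$-action always factors through $\pi_{0}$; this refinement is not needed for the bijection itself but explains why only the isotopy class of a boundary diffeomorphism matters.
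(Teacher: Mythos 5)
Your proof is correct. The paper introduces Lemma~\ref{L:extend} as ``the following simple lemma'' and gives no proof, so there is no argument of record to compare against; yours is the natural one, splitting injectivity into the formal observation that the fiber of the forgetful map is a single $\diff(M)$-orbit of markings and the use of the extension hypothesis to kill that orbit, with surjectivity handled by the uniqueness of smooth structures on closed $3$-manifolds. Your closing caveat is the one subtlety worth flagging in the statement: the hypothesis has to be read as applying to the smooth structure under consideration, which is unambiguous because the boundary smooth structure is canonical, and in the paper's actual application (to $V=W\cup_M X$ and $V'=W\cup_f X$) this is unproblematic since the boundary diffeomorphisms extend over the cobordism $X$ alone and hence over both smoothings.
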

There are some well-known instances where this hypothesis is satisfied, for instance~\cite{cerf:diffS3} if $W =B^4$ or more generally~\cite{laudenbach-poenaru:handlebodies} if $W = \natural^n S^1 \times B^3$.  We will give another example as part of our main theorem.

\section{Constructing absolutely exotic $4$-manifolds}\label{S:construct}
The proof of Theorem~\ref{T:general} requires several ingredients from knot theory and $3$-dimensional topology.  We explain the basic idea first, and then show how to find those ingredients. We start with a standard definition.
\begin{definition}\label{D:invertible}
An invertible cobordism $X^{n+1}$ from $M^{n}$ to $N^{n}$  is a smooth manifold with $\partial X = - M \cup N$, such that there is a manifold $X'$ with 
$$
\partial X' = -N \cup M\ \text{and } X \cup_N X' \cong M \times I.
$$
\end{definition}
We will implicitly assume that there are markings of $\partial X$ and $\partial X'$ that are used in gluing $X$ to $X'$ along $N$, and that the diffeomorphism between $X \cup_N X'$ and $M \times I$ respects the markings of the $M$ boundary components.  It will be the case in our examples that the inclusions of $M$ and $N$ into $X$ induce isomorphisms on homology, so that $X$ is an invertible homology cobordism.  It is easy to see that the inclusion of $N$ into $X$ induces a surjection on the fundamental group. We will be exclusively concerned with $n=3$.

There is a relative version that leads to the construction of invertible cobordisms.
\begin{definition}\label{D:ds}
An invertible knot concordance~\cite{sumners:inv1,sumners:inv2} is an embedding $C \cong S^1 \times I \hookrightarrow S^3 \times I $ from $K_0$ to $K_1$ such that there is a concordance $C' \subset S^3 \times I$ with $C \cup C'$ isotopic to the product concordance $K_0 \times I$.
\end{definition}
We will only make use of the setting when $K_0$ is the unknot, in which case $K_1$ is {\em doubly slice}.  

\begin{lemma}\label{L:ds}
Suppose that $L = L_1,\ldots, L_n$ is a framed link in $M$, and that $C_i$ is an invertible concordance from the unknot to the knot $K_i$, $i=1\ldots n$. Define
\begin{equation}\label{E:ds}
X = M \times I - \left(L \times D^2 \times I\right) \bigcup \coprod_i \left(S^3 \times I - C_i \times D^2\right)
\end{equation}
where we glue the longitudes of each $K_i$ to the respective meridian of $L_i$ and vice versa.  Then $X$ is an invertible homology cobordism from $M$ to a $3$-manifold $N$.  If $\pi_1(S^3 \times I - C_i) \cong \Z$, then the inclusion $M \to X$ induces an isomorphism on fundamental groups.
\end{lemma}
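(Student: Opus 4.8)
The plan is to read the geometry of $X$ directly off the decomposition \eqref{E:ds}. Write $X = A \cup_T B$, where $A = (M - \nu(L)) \times I$, $B = \coprod_i \big(S^3 \times I - \nu(C_i)\big)$, and $T = A \cap B = \coprod_i \big(\partial\nu(L_i)\big)\times I$ is a disjoint union of copies of $T^2 \times I$ (here $\nu(\cdot)$ denotes a tubular neighborhood). First I would identify the two ends. Since $C_i$ begins at the unknot $U_i$, the bottom face $S^3 - \nu(U_i)$ of the concordance exterior is a solid torus whose meridian disk is a Seifert disk for $U_i$; under the interchange of meridians and longitudes in the gluing of \eqref{E:ds} this disk caps off the meridian of $L_i$, so the gluing is a trivial Dehn filling and $\partial_0 X \cong M$. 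At the other end one gets $N := \partial_1 X$, which by construction is $M$ with each solid torus $\nu(L_i)$ replaced by the knot exterior $S^3 - \nu(K_i)$ via the same interchange; this is a closed $3$-manifold, so $X$ is a cobordism from $M$ to $N$.

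For invertibility, let $X'$ be constructed by the recipe \eqref{E:ds} with each $C_i$ replaced by the complementary concordance $C_i'$ from $K_i$ back to $U_i$ (which exists because $C_i$ is invertible). Stacking $X$ on top of $X'$ along $N$ glues $S^3 \times I - \nu(C_i)$ to $S^3 \times I - \nu(C_i')$ along $S^3 - \nu(K_i)$, producing $S^3 \times I - \nu(C_i \cup C_i')$; since $C_i \cup C_i'$ is ambiently isotopic in $S^3 \times I$, rel the two ends, to the product concordance of $U_i$, this is $(S^3 - \nu(U_i))\times I$, i.e.\ a solid torus times $I$. The $A$-blocks stack to $(M - \nu(L)) \times I$, and reassembling with the interchange gluing exactly as in the previous paragraph gives $X \cup_N X' \cong M \times I$, compatibly with the markings of the $M$-ends. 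Hence $X$ is invertible.

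To see $X$ is a homology cobordism, I would show $H_*(X, \partial_j X; \Z) = 0$ for $j = 0, 1$. The one nontrivial input is the well-known fact that for a concordance $C$ from $K_0$ to $K_1$ the exterior $S^3 \times I - \nu(C)$ has the integral homology of $S^1$, generated by a meridian, and that each inclusion $S^3 - \nu(K_j) \hookrightarrow S^3 \times I - \nu(C)$ is a homology isomorphism; applied to the $C_i$ (with $K_0 = U_i$, $K_1 = K_i$) this gives $H_*(B, \partial_j B) = 0$. Together with $H_*(A, \partial_j A) = 0$ and $H_*(T, \partial_j T) = 0$ — both $A$ and $T$ being products of a manifold with $I$ — the relative Mayer--Vietoris sequence for $(X, \partial_j X) = (A, \partial_j A) \cup_{(T, \partial_j T)} (B, \partial_j B)$ then gives $H_*(X, \partial_j X) = 0$.

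Finally, assume $\pi_1\big(S^3 \times I - \nu(C_i)\big) \cong \Z$, necessarily generated by a meridian $\mu(C_i)$. Van Kampen applied to $X = A \cup_T B$ presents $\pi_1(X)$ as the free product of $\pi_1(M - \nu(L))$ with $n$ infinite cyclic groups, amalgamated over the tori. Under the interchange gluing the framing longitude $\lambda_i$ of $L_i$ is sent to $\mu(C_i)$, which makes the $i$-th free factor redundant, while the meridian $\mu_i$ of $L_i$ is sent to the longitude of $C_i$; the latter is null-homologous in $S^3 \times I - \nu(C_i)$, hence trivial in the abelian group $\pi_1\big(S^3 \times I - \nu(C_i)\big)$. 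Therefore $\pi_1(X) \cong \pi_1(M - \nu(L))/\langle\langle \mu_1,\dots,\mu_n\rangle\rangle$, which is exactly the group obtained by Dehn filling the $\nu(L_i)$ back in, namely $\pi_1(M)$; tracking the identifications shows this isomorphism is induced by the inclusion $M = \partial_0 X \hookrightarrow X$. The step that needs the most care is the meridian/longitude bookkeeping at the various interfaces — especially verifying that at $t = 0$ the interchange gluing really reconstructs $M$ rather than performing surgery on $L$ — together with pinning down a clean reference (or a short self-contained Alexander-duality argument) for the homological triviality of the pairs $\big(S^3 \times I - \nu(C_i), S^3 - \nu(K_j)\big)$.
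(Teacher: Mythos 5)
Your proof is correct and takes essentially the same route the paper intends: the paper disposes of the lemma in three clauses (citing Gordon for the homology cobordism, calling invertibility obvious, and invoking van Kampen), and your argument simply supplies the details those clauses compress — the relative Mayer--Vietoris for the decomposition $X=A\cup_T B$, the stacking of $X$ with the complementary concordance exterior, and the meridian/longitude bookkeeping in van Kampen. The one place worth double-checking in a final write-up is the careful statement of the relative Mayer--Vietoris sequence for triads of pairs, but as you have set it up ($\partial_jX=\partial_jA\cup\partial_jB$, $\partial_jT=\partial_jA\cap\partial_jB$) it applies without issue.
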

\begin{proof}
That $X$ is a homology cobordism is standard~\cite{gordon:contractible}; the invertibility is obvious.  The statement about the fundamental group follows by van Kampen's theorem.
\end{proof}

We can now explain the basic idea of the proof of Theorems~\ref{T:main} and~\ref{T:general}.  Start with a relatively exotic manifold $(W,f)$ with $f$ the restriction of a homeomorphism $F:W\to W$. Form the union $V = W \cup_M X$, where $X$ is an invertible homology cobordism from $M =\partial W$ to some other $3$-manifold $N$.  We will construct $X$ using Lemma~\ref{L:ds}, so $V$ will be homotopy equivalent to $W$. Cutting out the embedded copy of $W$ in $V$ and regluing via $f$ results in a manifold $V'$, and the invertibility of $X$ will show that $V'$ is exotic relative to the identity marking on $\partial V' = N$.  To show that $V'$ is absolutely exotic, we will choose $N$ carefully so that all of its self-diffeomorphisms extend over $V'$.  Philosophically, we use the invertible homology cobordism to `kill' the symmetry of $M$.

\subsection{Taming the symmetry group of $M$}\label{S:symmetry}
Although our main interest lies in homology spheres, it turns out that the technique for modifying $M$ so that we understand its symmetry group is quite general.  The first step comes from a paper of Paoluzzi and Porti~\cite{paoluzzi-porti:links}
, who show that for any finite group $G$, there is a link $L$ in $S^3$ with hyperbolic complement, such that the symmetry group of $S^3- L$ is isomorphic to $G$. With minor modifications, their proof  works in an arbitrary $3$-manifold.
\begin{proposition}\label{P:G}
Let $M$ be an orientable $3$-manifold and $G$ be a finite group. Then there is a link $L$ in $M$ with hyperbolic complement, such that the symmetry group of $M- L$ is isomorphic to $G$.
\end{proposition}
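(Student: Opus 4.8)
The plan is to run the argument of Paoluzzi--Porti, observing that $S^3$ enters only as the ambient manifold and that nothing special about it is used. The starting point is Mostow--Prasad rigidity: for a complete finite-volume hyperbolic $3$-manifold $Y$, the symmetry group of $Y$ --- the group of isotopy classes of self-diffeomorphisms --- is isomorphic to the finite isometry group $\mathrm{Isom}(Y)$, which acts by permuting the cusps. So it is enough to build a compact orientable hyperbolic $3$-manifold $X$ with torus boundary, such that some Dehn filling of $X$ equals $M$, whose isometry group is exactly $G$; then $L$ is taken to be the union of the cores of the filling solid tori, so that $M \setminus L \cong \mathrm{int}(X)$ is hyperbolic with symmetry group $G$.

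Two classical inputs produce such an $X$ carrying at least a $G$-action. First, by Kojima's theorem every finite group $G$ occurs as the full isometry group of some closed orientable hyperbolic $3$-manifold $N$. Second, any closed orientable $3$-manifold is obtained from any other by surgery on a framed link; applied to $N$ and $M$ this gives a link $\gamma_0 \subset N$ whose surgery yields $M$, so that $M$ is a Dehn filling of $N \setminus \nu(\gamma_0)$. Now replace $\gamma_0$ by its full orbit $\gamma = \bigcup_{g \in G} g \cdot \gamma_0$ and put $X = N \setminus \nu(\gamma)$. Filling the components of $\gamma$ lying over $g\gamma_0$ with $g \neq e$ trivially, and the components over $\gamma_0$ with the prescribed slopes, still recovers $M$, so $M$ is a Dehn filling of $X$; and since $\gamma$ is $G$-invariant, the isometric $G$-action on $N$ restricts to an effective $G$-action on $X$, which by rigidity realizes a subgroup of $\mathrm{Isom}(X)$ isomorphic to $G$.

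It remains to choose $\gamma_0$ so that $X$ is hyperbolic and has no isometries beyond this $G$. Hyperbolicity is obtained by Myers-type ``excellence'' techniques, which are insensitive to the ambient manifold: after a $G$-equivariant modification of $\gamma_0$ --- adding an orbit of auxiliary components, each Dehn-filled trivially so as not to disturb the surgery yielding $M$, and performing the Myers moves near $\gamma_0$ before translating by $G$ --- one may assume $X$ is hyperbolic while $M$ is still a Dehn filling of it. The real difficulty, and the place where the work of Paoluzzi--Porti is concentrated, is to force $\mathrm{Isom}(X) = G$ rather than something larger: drilling can create isometries of $X$ that do not extend over $N$, and one must also rule out isometries that permute the cusps or alter meridian slopes in an unintended way. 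This is handled by choosing $\gamma_0$ generically, controlling the complex-length spectrum and the cusp shapes of $X$ so that every isometry fixes each cusp and preserves its meridian, hence descends to an isometry of $N$ and therefore lies in $G$. With such a $\gamma_0$ in hand, the link $L$ of cores of the filling solid tori satisfies $M \setminus L \cong \mathrm{int}(X)$, a hyperbolic manifold with symmetry group $\mathrm{Isom}(X) \cong G$, as required.
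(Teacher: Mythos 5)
Your approach diverges from the paper's in a significant way, and it also leaves a genuine gap at the crucial step. The paper does not invoke Kojima's theorem at all. Following Paoluzzi--Porti, it starts from a \emph{free} effective action of $G$ on an auxiliary $3$-manifold $A$ (easy to produce, unlike a closed hyperbolic manifold with prescribed isometry group), realizes $A$ as surgery on a link in $M$ whose complement carries the free $G$-action --- the only new observation being that Lickorish--Wallace allows $S^3$ to be replaced by $M$ --- makes that complement hyperbolic by Myers, and then kills extraneous symmetries by a specific device: removing further link components lying in a standardly embedded genus-$2$ handlebody. You instead begin with a closed hyperbolic $N$ having $\mathrm{Isom}(N)\cong G$, drill a $G$-orbit of a surgery link to pass to $M$, and aim to show that the drilled manifold $X$ has $\mathrm{Isom}(X)=G$. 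Your reduction --- force every isometry of $X$ to descend across the filling to an isometry of $N$, then quote $\mathrm{Isom}(N)=G$ --- is a sound skeleton, but it is not the argument the paper adapts.

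The gap is precisely where you write that ruling out extra isometries ``is handled by choosing $\gamma_0$ generically, controlling the complex-length spectrum and the cusp shapes.'' This is asserted, not proved, and it is not what Paoluzzi--Porti do despite your opening claim to be running their argument. One would need to show that such a $\gamma_0$ exists, that the Myers hyperbolization can be carried out $G$-equivariantly without spoiling the chosen spectral and cusp data, and that this data actually forces every isometry of $X$ to respect the $N$-filling slopes --- and drilling notoriously tends to \emph{create} new symmetries, which is exactly why Paoluzzi--Porti introduce the genus-$2$ handlebody modification. There is also a small internal inconsistency: you want ``every isometry to fix each cusp and preserve its meridian,'' yet the $G$-action you just arranged permutes the cusps over the orbit $\gamma$; the condition you really need is compatibility with the $N$-filling slopes under cusp permutation, which is symptomatic of how sketchy this step is. In short, the starting point (Kojima versus a free auxiliary action) and the proposed symmetry-killing mechanism both differ from the paper's, and the latter is where the real content lies and is not supplied.
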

\begin{proof}
The proof of the main theorem in~\cite{paoluzzi-porti:links} starts with a free and effective action of $G$ on an auxiliary $3$-manifold $A$ (called $M$ in~\cite{paoluzzi-porti:links}) and notes that $A$ can be viewed as surgery on G-invariant link $L$ in $S^3$, whose complement may be assumed to be hyperbolic via a result of Myers~\cite{myers1}.  The rest of the proof involves a further modification of the link by removing components lying in a standardly embedded genus-$2$ handlebody in its complement; the point of this is to make sure that the only symmetries of the complement are those given by the action of $G$.  Since any two $3$-manifolds are related by surgery on a framed link, it follows that 
$A$ could just as easily have been viewed as surgery on a link in $M$ with an effective free action of $G$ on its complement.  The link produced by the rest of the proof would then be a link in $M$ with the desired properties.
\end{proof}
Taking $G$ to be the trivial group, we get an obvious corollary.
\begin{corollary}\label{C:asymmetric}
Any orientable $3$-manifold $M$ contains a link $L$ with hyperbolic complement, such that the symmetry group of $M- L$ is trivial.
\end{corollary}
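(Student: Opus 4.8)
The plan is to obtain the statement as the special case $G=\{1\}$ of Proposition~\ref{P:G}. Concretely, I would apply that proposition to the given orientable $3$-manifold $M$ together with the trivial group: it produces a link $L\subset M$ whose complement $M-L$ is hyperbolic and whose symmetry group is isomorphic to $G$, hence trivial. Since Proposition~\ref{P:G} has already been established (by the adaptation of Paoluzzi--Porti's argument to an arbitrary $3$-manifold), there is nothing further to do; this is why the statement is phrased as a corollary rather than a theorem.

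The only point worth recording is the meaning of the ``symmetry group'' of the hyperbolic manifold $M-L$. Because $M-L$ admits a complete finite-volume hyperbolic metric, Mostow--Prasad rigidity identifies its isometry group with its (necessarily finite) mapping class group, equivalently with $\operatorname{Out}(\pi_1(M-L))$; triviality of this group is exactly the conclusion of Proposition~\ref{P:G} when $G$ is trivial, so the corollary follows verbatim. I would also remark that one may, if desired, arrange $L$ to be nonempty (and even to have more than one component), since the construction in Proposition~\ref{P:G} builds $L$ with as many components as needed.

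I do not expect any genuine obstacle here: the corollary is immediate once Proposition~\ref{P:G} is in hand. For completeness, an argument not routing through that proposition would instead start from a theorem of Myers giving \emph{some} hyperbolic link $L_0\subset M$, observe that its symmetry group $G_0$ is finite, and then adjoin further link components (carried along by the $G_0$-action and chosen to break all remaining symmetries) so as to reduce $G_0$ to the trivial group — but this merely reproduces the Paoluzzi--Porti construction already invoked, so the direct deduction from Proposition~\ref{P:G} is the cleanest route.
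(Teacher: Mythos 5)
Your proposal is correct and matches the paper exactly: the paper derives the corollary by simply taking $G$ to be the trivial group in Proposition~\ref{P:G}, with no further argument. Your additional remarks on Mostow--Prasad rigidity and a possible Myers-based alternative are sound but beyond what the paper records.
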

The link produced by the above proof would have $4$ components if $G$ is trivial.  In section~\ref{S:examples} we will give concrete examples of how to choose $L$; in those examples $L$ will in fact be a knot.  
\subsection{Some doubly slice knots}\label{ss:ds}

The other ingredient in our construction is a knot $J$ with the following properties: it is to be doubly slice, hyperbolic, and with trivial symmetry group.  We know of three such knots: the Kinoshita-Terasaka~\cite{kinoshita-terasaka} knot $11n42$ (Figure~\ref{figa3}) as well as $12n0313$, and $12n0430$.  These were found, starting with a list of doubly slice knots supplied by Jeff Meier, by a search on Knotinfo~\cite{knotinfo} and some computations with SnapPy~\cite{SnapPy}.   Such invariants are computed numerically, and in principle require a rigorous verification. Fortunately, the recent paper~\cite{dunfield-hoffman-licata:symmetry} shows how to certify the symmetry of certain 3-manifolds using interval arithmetic.  The arxiv listing for that paper contains code (based in turn on~\cite{HIKMOT}, which verifies hyperbolicity) that can be run, starting with a triangulation found via SnapPy, and will rigorously compute the symmetry group. All properties of the manifolds used in our construction were verified in this way; files describing the triangulations are available upon request to the authors.

We summarize the output of these calculations.
\begin{proposition}\label{P:calc}
The knots $11n42$, $12n0313$, and $12n0430$ are hyperbolic with trivial symmetry group and doubly slice, where the complement of each slice disk has fundamental group $\Z$.
\end{proposition}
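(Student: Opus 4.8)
\noindent The plan is to verify the four asserted properties of each of the three knots --- hyperbolicity, triviality of the symmetry group, double sliceness, and the identification of $\pi_1$ of each slice-disk complement with $\Z$ --- one knot at a time, separating a ``soft'' topological part from a computer-certified part.

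First I would fix an explicit doubly slice structure. Starting from the candidates on Meier's list, each of $11n42$, $12n0313$ and $12n0430$ should be exhibited as the equatorial cross-section of an explicit unknotted $2$-sphere $\Sigma\subset S^{4}$: concretely, one records two systems of bands on a diagram of the knot $J$, one carrying $J$ to an unlink below the equatorial $S^{3}$ and one carrying it to an unlink above, so that the traces of these band moves, capped off by disks, assemble the two halves $D_{\pm}=\Sigma\cap B^{4}_{\pm}$ of a $2$-sphere $\Sigma$ with $D_{+}\cup_{J}D_{-}=\Sigma$. Unknottedness of $\Sigma$ is then checked by hand, by reducing the resulting banded unlink diagram to the standard one (equivalently, by verifying $S^{4}\setminus\nu(\Sigma)\cong S^{1}\times B^{3}$). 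For the fundamental-group statement, the banded diagram realizes each $D_{\pm}$ as a ribbon disk, so $B^{4}\setminus\nu(D_{\pm})$ admits a handle decomposition with a single $0$-handle and otherwise only $1$- and $2$-handles; reading off the $2$-handle attaching words yields an explicit finite presentation of $\pi_{1}(B^{4}\setminus\nu(D_{\pm}))$ with abelianization $\Z$, which I would then reduce by Tietze transformations (with computer-algebra assistance if needed) to the cyclic presentation $\langle x\rangle$. For the Kinoshita--Terasaka knot $11n42$ this last step can likely be extracted from the existing literature; cf.~\cite{kinoshita-terasaka}.

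The two remaining properties are handled by certified computation. For each knot I would construct a triangulation of the complement in SnapPy and then: (i) feed it to the verified-computation routines of~\cite{HIKMOT}, which return, via interval arithmetic, a rigorous proof that the triangulation supports a complete hyperbolic structure, so that by Mostow rigidity the complement is hyperbolic with a well-defined isometry group; and (ii) run the symmetry-group code of~\cite{dunfield-hoffman-licata:symmetry}, which rigorously enumerates the isometries of that structure and certifies that the only one is the identity. The relevant triangulation files are available from the authors.

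The main difficulty is the topological half, not the computational one: hyperbolicity and the symmetry group are essentially black-boxed by the certified software, but the unknotted sphere through each knot and the ribbon presentations of $D_{\pm}$ must be produced explicitly, and the group-theoretic simplification of the disk-complement presentation to $\Z$ --- which does \emph{not} follow formally from $H_{1}=\Z$, since a priori such a presentation could define, say, a free product of $\Z$ with a perfect group --- is the delicate step. A secondary point to flag is that the raw SnapPy data (the triangulations, and the a priori only numerical evidence of hyperbolicity) acquire rigorous status only after being processed by the routines of~\cite{HIKMOT} and~\cite{dunfield-hoffman-licata:symmetry}; the write-up should be explicit that it is this certified output, rather than floating-point computation, that underlies the proposition.
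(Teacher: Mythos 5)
Your approach matches the paper's: certified computation (HIKMOT plus the Dunfield--Hoffman--Licata routine) for hyperbolicity and triviality of the symmetry group, and explicit handle diagrams exhibiting the knot as the equatorial cross-section of an unknotted $2$-sphere, with the $2$-handle of the slice-disk complement cancelling a $1$-handle to give $\pi_1\cong\Z$. The only substantive difference is scope: the paper carries out the topological verification only for $J=11n42$, explicitly leaving $12n0313$ and $12n0430$ to the reader since only $J$ is used later, whereas you propose to treat all three.
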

\begin{proof}
The statements about hyperbolicity and symmetry were proved by computation, as described above.  We will show  $J = 11n42$ is doubly slice; this seems to be a well-known fact.  The other knots are left to the interested reader, as only $J$ is used in this paper. The dotted line in Figure~\ref{figa3} indicates the slice move for $J = 11n42$ (specifying a disk $D$ which $J$ bounds in $B^4$). 
\begin{figure}[ht]  \begin{center}
\includegraphics[width=.27\textwidth]{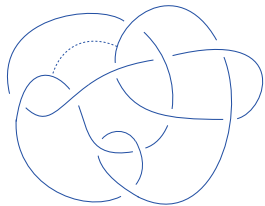}   
\caption{J (the dotted line indicates the slice move) }   \label{figa3}
\end{center}
\end{figure}

\begin{figure}[ht] 
\labellist
\small\hair 2pt
 \pinlabel {(a)}  at 120 -30
 \pinlabel {(b)}  at 420 -30
\endlabellist
 \begin{center}\includegraphics[width=.58\textwidth]{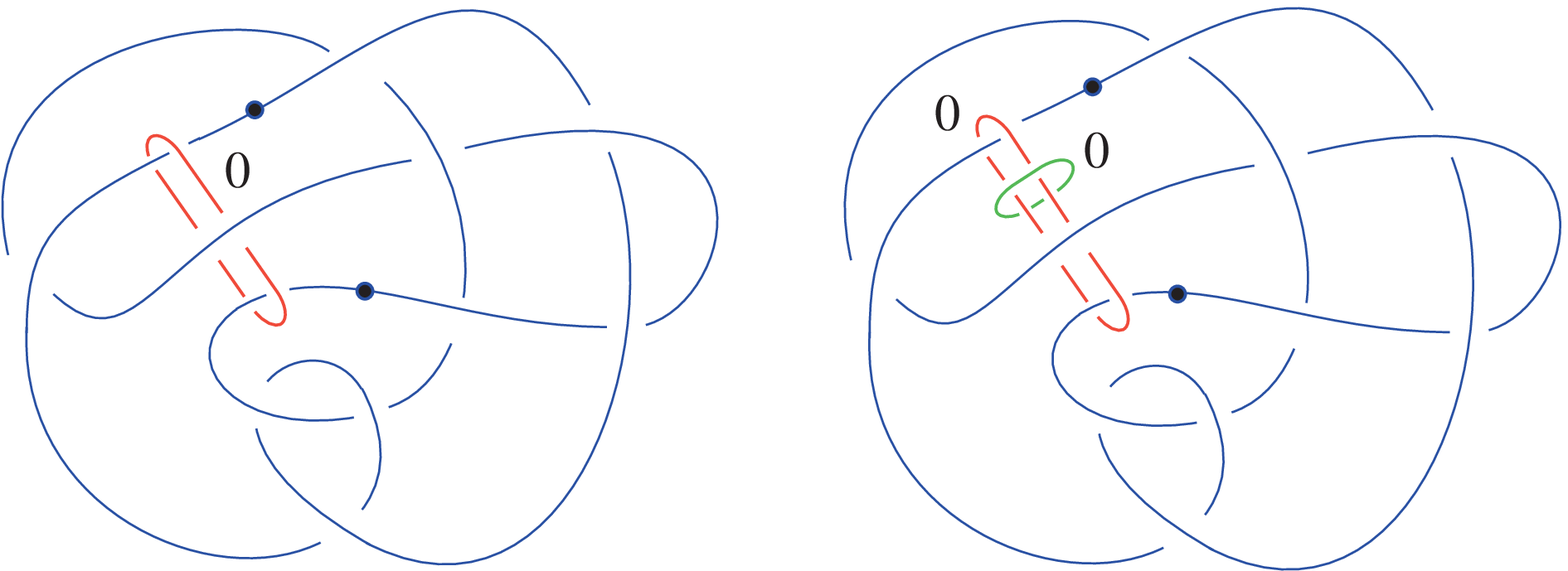}   
 \vspace*{1ex}
\caption{$B^{4}-D$ and $S^{4}-S^{2}$ }   \label{figa4}
\end{center}
\end{figure}
The first picture of Figure~\ref{figa4} is the handlebody of the complement of the disk $D$ in $B^{4}$, the second picture is the complement of the $S^2$ (which is the double of $D$) in $S^{4}$ (the reader can verify this by Section 1.4 of~\cite{akbulut:book}). After an isotopy, Figure~\ref{figa4} becomes Figure~\ref{figa5}.  The statement about the fundamental group comes from Figure~\ref{figa5}(a), because the $2$-handle algebraically cancels the lower dotted $1$-handle. The double sliceness of $J$ is now evident in Figure~\ref{figa5}(b), i.e. the complement of  $S^{2}$ in $S^{4}$ is $S^1 \times B^3$, so $S$ is an unknotted $2$-sphere~\cite{gluck:twist}.
\end{proof}
\begin{figure}[ht]  
\labellist
\small\hair 2pt
 \pinlabel {(a)}  at 84 -30
 \pinlabel {(b)}  at 345 -30
\endlabellist
\begin{center}
\includegraphics[width=.75\textwidth]{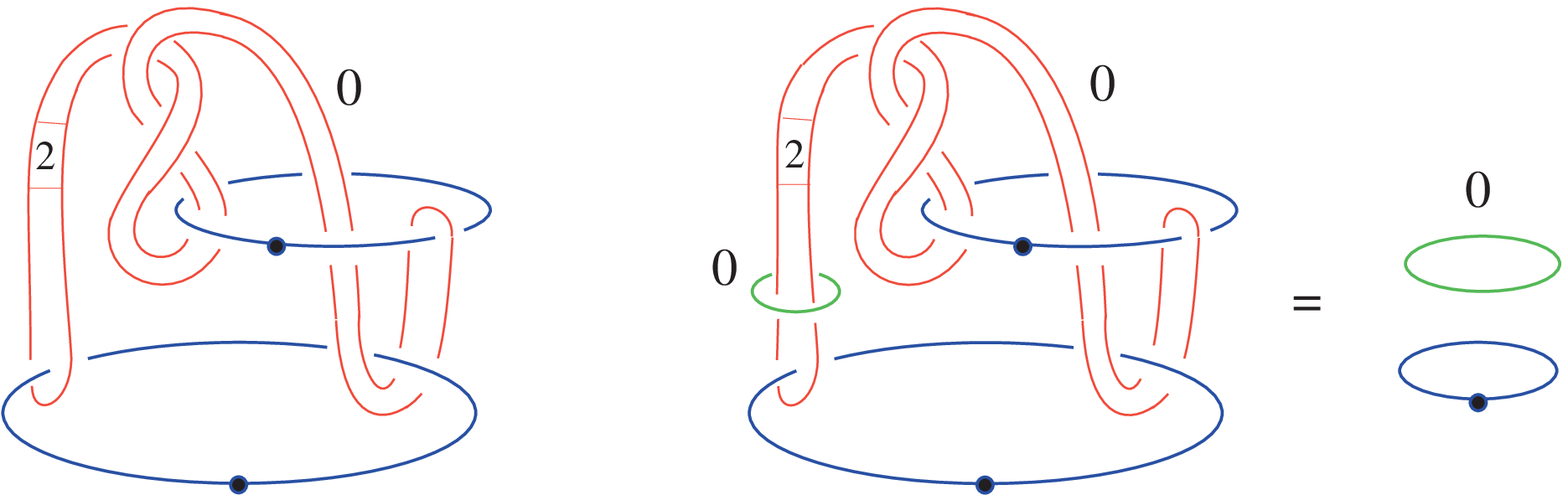}   
 \vspace*{1ex}
\caption{$B^{4}-D$ and $S^{4}-S^{2}$  }   \label{figa5}
\end{center}
\end{figure}

%
%
%
\section{Proof of Theorems~\ref{T:general} and~\ref{T:main}}\label{S:main-proof}
We assemble the results from the previous sections to prove Theorem~\ref{T:general}; afterwards we will prove Theorem~\ref{T:main}. 
\begin{proof}[Proof of Theorem~\ref{T:general}]
Recall the setup; we have a $4$-manifold $W$ with $\partial W = M$, and a homeomorphism $F$ whose restriction $f$ to $M$ is a diffeomorphism that does not extend to a diffeomorphism of $W$.   Using Corollary~\ref{C:asymmetric}, choose an $n$-component link $L \subset M$ so that $M-L$ is hyperbolic and has trivial symmetry group.  Let $C$ be the invertible concordance from the unknot $U$ to $J = 11n42$. Then form the homology cobordism $X$  from $M$ to $N$ described in Lemma~\ref{L:ds}.   We showed in Proposition~\ref{P:calc} that $\pi_1(S^3 \times I - C) \cong \Z$, so that $\pi_1(M) \to \pi_1(X)$ is an isomorphism.

We will use these facts to verify the following.\\[1ex]
{\bf Claim:} The group of diffeomorphisms of $N$ mod isotopy is isomorphic to $\oplus_{i=1}^n\left(Z \oplus Z\right)$, and every element extends over the cobordism $X$ in such a way that it is isotopic to the identity on $M$.  \\
{\bf Proof of claim:}  Let $P$ be the exterior of the link $L$, and write 
$$
\partial P = \T = \cup_{i=1}^n T_i
$$
for the boundary components of $P$. Then
$$
N = P \cup_{\T \times 0} \left(\T\times [0,1]\right) \cup_{\T \times 1} \left(S^3 - \nu(J)\right).
$$
Since both $P$ and $S^3 - \nu(J)$ are hyperbolic, the tori in the JSJ decomposition~\cite{jaco-shalen:seifert,johannson:book} of $N$ consist of the components of $\T$. It follows that any self-diffeomorphism of $f: N\to N$ is isotopic to one that preserves $\T\times [0,1]$.  Since the symmetry groups of $J$ and $P$ are trivial, it follows that we can assume that in fact $f$ is the identity on $\T \times 1$.   Moreover, $P$ and $S^3 - \nu(J)$ are not diffeomorphic, because they have a different number of boundary components, so $f$ must take $P$ to $P$ and $S^3 - \nu(J)$ to itself as well.
For each $i$, since $f$ on $T_i \times 0$ is homotopic to $f$ on $T_i \times 1$, we can assume that $f$ is the identity on $T_i \times 0$ as well.   According to Waldhausen~\cite{waldhausen:sufficiently-large}, the group of isotopy classes of diffeomorphisms of $T\times [0,1]$ (relative to the boundary) is isomorphic to the group of self-homotopy equivalences (again, relative to the boundary).  The latter is readily seen to be $\Z \oplus \Z$, where the elements can be described as follows. The element $(a,b) \in \Z \oplus \Z$ corresponds to the Dehn twist of  $S^1 \times S^1 \times [0,1]$ given by 
$$
(z,w,t) \to (e^{2\pi i at}z, e^{2\pi i bt}w, t).
$$

It follows trivially that the isotopy classes of diffeomorphisms of $\T\times [0,1]$ that preserve the components, relative to the boundary is a sum of copies of $\Z \oplus \Z$, with generators as described.   
Any such generator extends in a natural way over $S^1 \times D^2$. It is easy to see that the extension, as a diffeomorphism of $S^1 \times D^2$, is isotopic to the identity, via an isotopy that is the identity on the boundary.
For example, take the disk $D^2$ to have radius $2$, and write the diffeomorphism on $S^1 \times \{w\mid 1\leq |w| \leq 2\}$. So the extension over $S^1 \times D^2$ is given by 
$$
F(z,w) =
\begin{cases} 
(z,w)& \ \text{for }  |w| \leq 1\\
(e^{2\pi i a |w|}z, e^{2\pi i b|w|}w)& \ \text{for } 1 \leq |w| \leq 2.
\end{cases}
$$
Then the isotopy is given by 
$$
F_s(z,w) =
\begin{cases} 
(e^{2\pi i a s}z, e^{2\pi i bs}w)& \ \text{for }  |w| \leq 1\\
(e^{2\pi i as(2- |w|)}z, e^{2\pi i bs(2- |w|)}w)& \ \text{for } 1 \leq |w| \leq 2.
\end{cases}
$$

Write $V'$ for $X \cup_f W$, where the diffeomorphism $f$ is the restriction of $F$ to $M$.  (If $W$ is contractible, this is a cork twist along the embedded copy of $W$ in $V$.)  It has an obvious marking of the boundary coming from the identification of $N$ with a boundary component of $X$.  If $V'$ were diffeomorphic to $V$, preserving this marking, then we could glue this diffeomorphism to the identity of $\bar{X}$ to get a diffeomorphism
\begin{equation}\label{E:twist}
\bar{X} \cup_N X \cup_f W  \cong W
\end{equation}
But  $\bar{X} \cup_N X \cong N \times I$ (relative to the identity on the boundary) and hence $f$ extends to $\bar{X} \cup_N X$.  It follows that \eqref{E:twist} would result in a diffeomorphism of $(W,f)$ with $(W,\id)$, contradicting~\cite{akbulut:contractible}.
Since $V$ and $V'$ are simply connected homology balls, they are contractible, hence homeomorphic.  By the claim above and Lemma~\ref{L:extend}, there is no diffeomorphism between $V$ and $V'$.

Finally, the invertibility of the cobordism $X$ yields the embedding 
$$
V \subset V \cup_N \bar{X} = W \cup_M X \cup_n \bar{X} \cong  W. 
$$
Since $f$ extends naturally to a diffeomorphism of  $X \cup_N \bar{X} \cong M \times I$, the same argument produces an embedding of $V'$ in $W$.
\end{proof}
\begin{proof}[Proof of Theorem~\ref{T:main}]
Apply Theorem~\ref{T:general} to $(W,\tau)$, a cork or anti-cork (an exotic smoothing of a homotopy circle).  This gives a pair of non-diffeomorphic manifolds $V$ and $V'$ with the same boundary (contractible or a homotopy circle). But Freedman's theorem says that $V$ and $V'$ are in fact homeomorphic, so they can be viewed as (absolutely) exotic pairs.
\end{proof}

\section{Explicit examples of absolutely exotic corks and anti-corks}\label{S:examples}
In the proof of Theorem~\ref{T:general}, we used the fact that we can find a link $L$ in any $3$-manifold $M$ for which the symmetry group of $M-L$ is trivial. In practice, it is easy to find such links, and get some nice simple examples of absolutely exotic manifolds, discussed in this section. To prove that the examples have the required properties, we need to know something about the hyperbolicity and symmetry groups of certain manfolds; we used SnapPy and extensions as in Section~\ref{ss:ds} to rigorously verify these properties.  With the exception described in Example~\ref{E:alpha}, such assertions were rigorously checked in this way.  Curiously, we are unable to rigorously verify the simplest of the examples via such computer calculations.   

By \cite{akbulut1979mazur} the boundary of the Mazur cork $(W, \tau)$  (Figure~\ref{F:mazur}) is diffeomorphic to the $3$-manifold $M^{3}$, which is obtained by $+1$ surgery on the pretzel knot $K = P(-3,3,-3)$ of Figure~\ref{figa7}. 

\begin{figure}[ht]  
\begin{center}
\includegraphics[width=.4\textwidth]{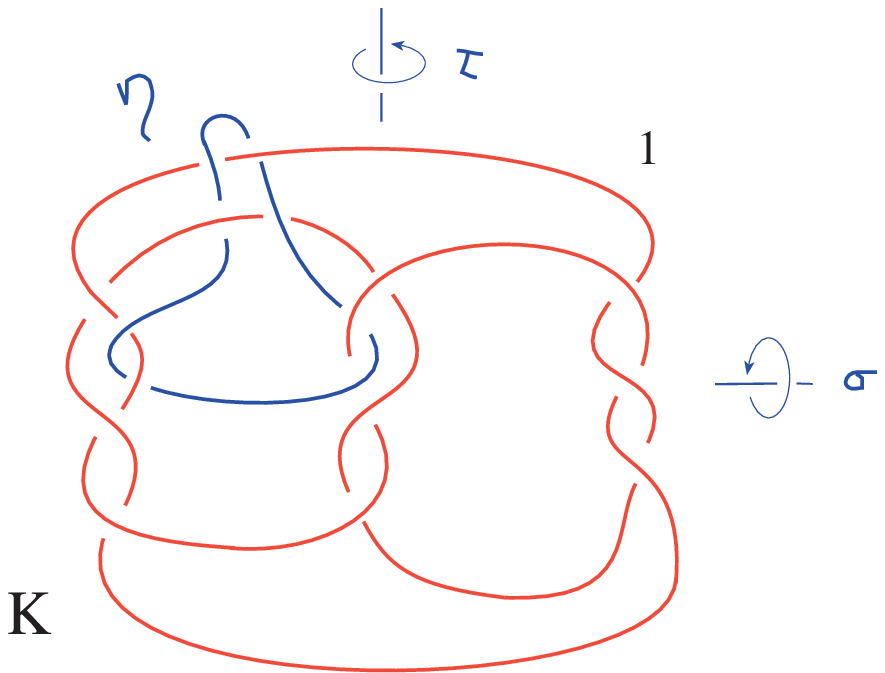}   
\caption{$K^{+1}$}   \label{figa7}
\end{center}
\end{figure}
\noindent

This diffeomorphism $f:\partial W\to \partial K^{+1}$ is explained by the steps of the  Figure~\ref{figa1}.   The curves $\alpha$ and $f(\alpha)$ will be used in the second example.

\begin{figure}[ht]  \begin{center}
\includegraphics[width=.75\textwidth]{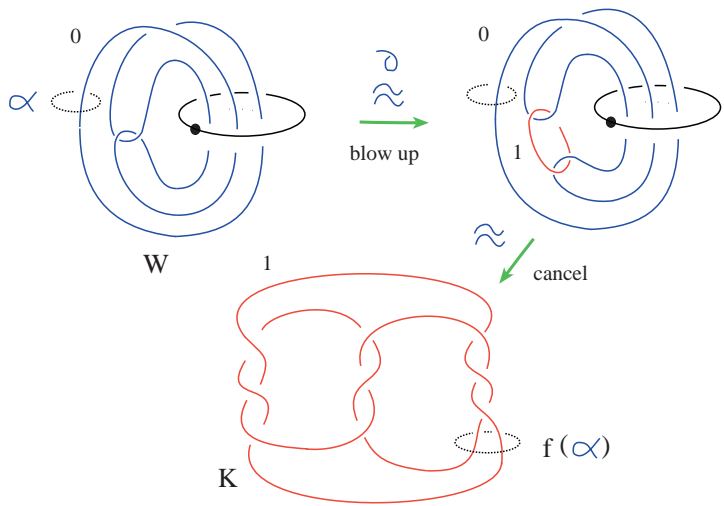}   
\caption{The diffeomorphism $f: \partial W\to  M$}   \label{figa1}
\end{center}
\end{figure}

The symmetry group of $K$ (up to isotopy) is a $Z_2 \oplus Z_2$ with generators $\sigma$ and $\tau$ as indicated in Figure~\ref{figa7}; both of these extend over the surgery to symmetries of $M$ which is,  by construction (see also~\cite{auckly-kim-melvin-ruberman:isotopy})
 $\tau$-equivariantly diffeomorphic to $\partial W$.

\begin{example}
Consider the knot $\eta$ in $M$, drawn in Figure~\ref{figa7}, and let $P$ be the exterior of a tubular neighborhood of $\eta$, with $T$  its boundary.  $P$ is obtained by doing $+1$ framed surgery on the first component of the link $L = K \cup \eta \subset S^{3}$ in the complement of a tubular neighborhood of $\eta $.  The choice of $\eta$ was made in order to disrupt the symmetries of $K$.


A patient reader can check that under the diffeomorphism $f:\partial W \to \partial K^{+1}$ of Figure~\ref{figa1}, $\eta$ corresponds to the curve $\eta$ in $\partial W$ indicated in Figure~\ref{figa2}.

\begin{figure}[ht]  \begin{center}
\includegraphics[width=.4\textwidth]{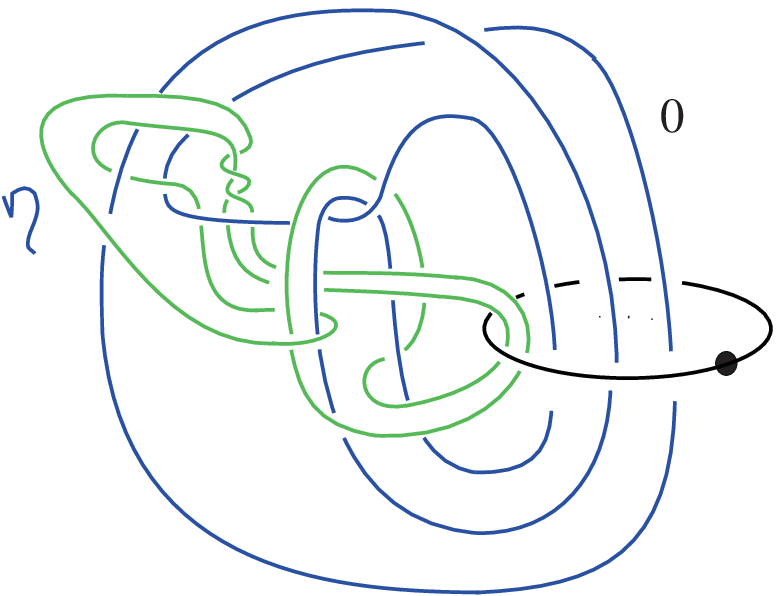}   
\caption{V}   \label{figa2}
\end{center}
\end{figure}

Now do the gluing construction as described in Lemma~\ref{L:ds}, using the knot $\eta$ for the link denoted $L$, and the concordance $C$ from the unknot to the doubly slice knot $11n42$ to get an invertible homology cobordism $X$ from $M$ to a $3$-manifold $N$.  Using the handle diagram in Figure~\ref{figa5}  for the complement of $C$ and Figure~\ref{figa2}, we see that this corresponds to the handlebody in Figure~\ref{figa8} (see 5.3 of~\cite{akbulut:book}). Hence Figure~\ref{figa8} describes $V$ with $\partial V=N$. Notice that 
In this figure  $W$ can easily be identified inside of $V$, and $V$ is built from $W$ by attaching two $1/2$-handle pairs (which is $X$). Also from Figure~\ref{figa8}, the reader can easily verify that $V$ is simply connected (as we saw more generally in the proof of Theorem~\ref{T:general}).

\begin{figure}[ht]  \begin{center}
\includegraphics[width=.75\textwidth]{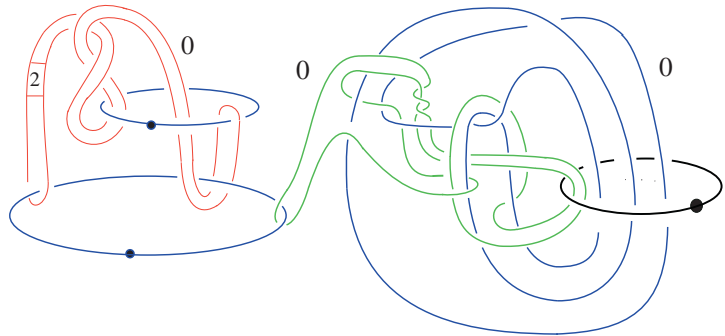}   
\caption{$W\subset V=W\cup_{M}X$ }   \label{figa8}
\end{center}
\end{figure}
\end{example}

\begin{example}\label{E:alpha}
We originally tried this construction making use of the simpler curve $\alpha$ in $M$ drawn in Figure~\ref{figa1}.  We computed using SnapPy that the corresponding manifold $P_1$ (resulting from $+1$ surgery on $K$) is hyperbolic with symmetry group $\Z_2$, generated by $\sigma$ as indicated in Figure~\ref{figa7}. However, the procedure of~\cite{dunfield-hoffman-licata:symmetry} for verifying this numerical calculation of the symmetry group breaks down for $P_1$. The reason, as explained to us by Dunfield, is that not all of the cells in the Epstein-Penner canonical cellulation~\cite{epstein-penner} of $P_1$ are tetrahedra. In this case we would also get a simpler $V_1$ as shown in Figure~\ref{figa9}.
If we assume that the symmetry group of $P_1$ is as stated, then a slightly more elaborate argument with the JSJ decomposition then implies that the corresponding manifold $N_1$ has trivial symmetry group.  This would imply  $V_1'$ is an absolutely exotic copy of $V_1$, but proving this would require a rigorous verification of the symmetry group.

\begin{figure}[ht]  \begin{center}
\includegraphics[width=.5\textwidth]{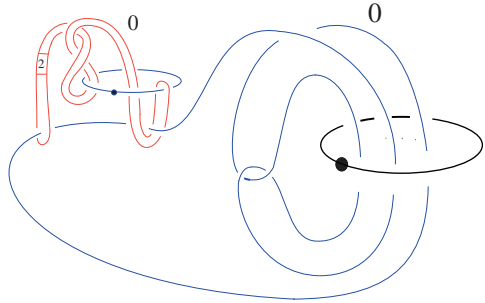}   
\caption{$W\subset V_1=W\cup_{M}X_1$ }   \label{figa9}
\end{center}
\end{figure}
\end{example}

\begin{example}\label{E:anticork}
As described in~\cite{akbulut:zeeman,akbulut:book,akbulut:stable} an anticork is a relatively fake manifold homotopy equivalent to $S^1$.  An example is given by the indicated involution of the boundary of the following manifold; the anticork itself is given by carving out the ribbon disk indicated by either of the dotted ribbon moves in Figure~\ref{F:anticork}.
\begin{figure}[htb]
\labellist
\small\hair 2pt
 \pinlabel {$\tau$}  at 140 222
\endlabellist
\centering
\includegraphics[scale=0.5]{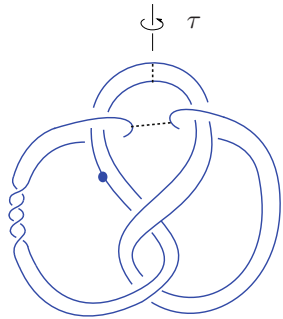}
\caption{An anticork $Q$}
\label{F:anticork}
\end{figure}

This can be turned into an absolutely exotic homotopy $S^1$ by the general technique from Theorem~\ref{T:general}.  As in the previous example, we can make this explicit and somewhat simpler by gluing in the complement of the $11n42$ slice along the knot $\eta \times I$ where $\eta$ is the curve indicated in~\ref{F:anti}.  That figure also shows the resulting absolute anticork.
\begin{figure}[ht]  
\begin{center}
\labellist
\small\hair 2pt
 \pinlabel {$\eta$}  at 175 80
\endlabellist
\includegraphics[width=.6\textwidth]{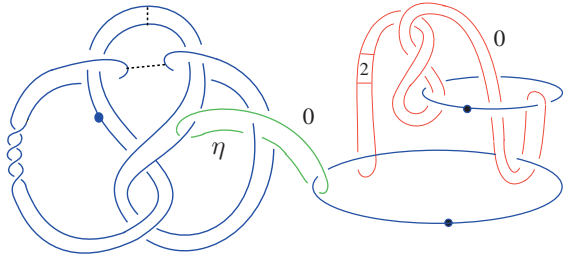}   
\caption{An absolute anticork}   \label{F:anti}
\end{center}
\end{figure}
\end{example}

\section{From infinitely many relative exotic structures  to absolutely exotic structures}\label{S:infinite}
In this section we will show how to modify a contractible manifold $W$ which admits infinitely many smooth structures relative to its boundary to infinitely many absolutely exotic smooth structures on a different contractible manifold $V$. The modification will not leave us with a full understanding of the symmetry group of the boundary, so we replace Lemma~\ref{L:extend} by a weaker result.
\begin{lemma}\label{L:finite}
Suppose that $M^n$ is a manifold such that $\pi_0(\diff(M))$ is finite.  If the manifold $V$ has infinitely many smoothings relative to some fixed identification $j: M \to \partial V$, then $V$ has infinitely many absolute smoothings.
\end{lemma}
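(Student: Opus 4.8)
The plan is to argue by contradiction using a counting (pigeonhole) argument on the action of $\pi_0(\diff(M))$ on the set of relative smoothings. Recall from Section~\ref{S:relative} that if $V$ carries a smooth structure, then $\pi_0(\diff(M))$ acts on the set of relative diffeomorphism classes $\{(V, \sigma)\}$ by precomposing the marking $j$ with a self-diffeomorphism of $M$ (more precisely, using pseudo-isotopy classes, but for the counting argument the isotopy version suffices). Two relative smoothings $(V,\sigma)$ and $(V,\sigma')$ lie in the same orbit of this action precisely when they determine the same absolute smoothing — that is, when there is a diffeomorphism $V \to V$ not required to respect the markings. So the set of absolute smoothings of $V$ is the quotient of the set of relative smoothings by the $\pi_0(\diff(M))$-action.

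First I would make this orbit-equivalence precise: given a diffeomorphism $G: V \to V'$ of the underlying smooth manifolds of two relative smoothings $(V,j)$ and $(V',j')$, the composite $j'^{-1}\circ G \circ j : M \to M$ (which makes sense since $G$ carries $\partial V$ to $\partial V'$) is a self-diffeomorphism of $M$ whose class $[\psi] \in \pi_0(\diff(M))$ sends the relative class of $(V,j)$ to that of $(V', j')$; conversely any such $\psi$ gives an absolute diffeomorphism. Hence the fibers of the forgetful map from relative to absolute smoothings are exactly the orbits of $\pi_0(\diff(M))$, and each orbit has size at most $|\pi_0(\diff(M))| =: k < \infty$.

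Now suppose for contradiction that $V$ has only finitely many absolute smoothings, say $m$ of them. Then the set of relative smoothings is covered by $m$ orbits, each of cardinality $\le k$, so there are at most $mk < \infty$ relative smoothings — contradicting the hypothesis that $V$ has infinitely many. Therefore $V$ has infinitely many absolute smoothings.

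The only genuinely delicate point is the bookkeeping in the first step — checking that "same absolute smoothing" is exactly "same $\pi_0(\diff(M))$-orbit," and in particular that the action is well-defined on isotopy (pseudo-isotopy) classes rather than on honest diffeomorphisms, so that the orbit-size bound really is $|\pi_0(\diff(M))|$ and not something larger. This is the same subtlety already flagged in the discussion following Definition~\ref{D:relative}, and it is handled there; the rest is a one-line pigeonhole count. I expect no serious obstacle once the identification of fibers with orbits is spelled out carefully.
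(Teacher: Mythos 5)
Your argument is correct and is essentially the same as the paper's: both exploit that only finitely many boundary re-markings are available, so infinitely many relative smoothings cannot collapse to finitely many absolute ones. You present it as an orbit count for the $\pi_0(\diff(M))$-action on relative smoothings, whereas the paper extracts a subsequence of absolutely diffeomorphic $V_k$'s and observes that the induced boundary diffeomorphisms $j^{-1}\circ F_k|_{\partial V_k}\circ j$ would give infinitely many distinct classes in $\pi_0(\diff(M))$ — the contrapositive of the same pigeonhole argument.
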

\begin{proof}
Suppose that $V$ has only finitely many different smooth structures that are absolutely distinct, and let $V_k$, $k\in \N$ be infinitely many distinct smoothings relative to $j$ that are not diffeomorphic relative to $j$.  Then, replacing the $V_k$ by an appropriate subsequence, we may assume that all $V_k$ are diffeomorphic. Letting  $F_k:V_1 \to V_k$ be a diffeomorphism, we must have that  $j^{-1} \circ {F_k}|_{\partial V_k} \circ j$ are all distinct up to isotopy, contradicting our assumption that $\pi_0(\diff(M))$ is finite. 
\end{proof}

Now we make use of an old result of the second author~\cite{ruberman:seifert}, with a slight amplification.  (The argument could also be based on the construction of Paoluzzi-Porti, as in Theorem~\ref{T:general}.)
\begin{theorem}\label{T:invertible}
Let $M$ be a closed $3$ manifold. Then there is an invertible cobordism $X$ from $M$ to a hyperbolic manifold $N$, such that $\pi_1(M)$ normally generates $\pi_1(X)$.
\end{theorem}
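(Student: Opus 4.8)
The plan is to recall the construction from \cite{ruberman:seifert} and then observe that it can be arranged so that the target manifold $N$ is hyperbolic and $\pi_1(M)$ normally generates $\pi_1(X)$. The starting point is the standard surgery-theoretic description of invertible cobordisms via invertible concordances, exactly as in Lemma~\ref{L:ds}: if $L = L_1 \cup \cdots \cup L_n$ is a framed link in $M$ and each $C_i$ is an invertible concordance from the unknot to a knot $K_i$, then the manifold $X$ of \eqref{E:ds} is an invertible homology cobordism from $M$ to a manifold $N$ obtained from $M$ by replacing tubular neighborhoods of the $L_i$ by the exteriors of the $K_i$ (i.e., a ``satellite-type'' surgery). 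The inclusion of $N$ into $X$ is surjective on $\pi_1$, and since $N$ is built from $M$ by removing solid tori and gluing in knot exteriors along their boundaries, van Kampen shows that $\pi_1(X)$ is generated by the image of $\pi_1(M)$ together with the meridians of the $L_i$; but each such meridian is a conjugate of a curve in $M$ (the core of the removed solid torus, or equivalently the corresponding $L_i$-meridian pushed into $M$), so $\pi_1(M)$ normally generates $\pi_1(X)$. This takes care of the fundamental group statement for \emph{any} choice of link $L$ and invertible concordances.

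Next I would arrange hyperbolicity of $N$. The cleanest route is to choose the framed link $L$ in $M$ so that $M - L$ is already hyperbolic — this is possible by Corollary~\ref{C:asymmetric} (or directly by Myers~\cite{myers1}) — and to choose each $K_i$ to be a doubly slice hyperbolic knot, for instance $K_i = 11n42$ with its invertible concordance $C_i$ from the unknot, as supplied by Proposition~\ref{P:calc}. Then $N$ is glued from the hyperbolic pieces $M - \nu(L)$ and the hyperbolic knot exteriors $S^3 - \nu(K_i)$ along incompressible tori. To get genuine hyperbolicity (rather than merely a nontrivial JSJ decomposition), I would instead carry out a Dehn-filling/drilling argument: realize $N$ as Dehn filling on a link complement that is hyperbolic, and appeal to Thurston's hyperbolic Dehn surgery theorem, choosing the concordances (equivalently, the satellite patterns) generically enough that the relevant filling slopes avoid the finitely many exceptional ones. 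Concretely, one replaces $L_i \times D^2$ not by a knot exterior directly but builds $X$ so that $N$ is obtained by Dehn filling a hyperbolic manifold $N^\circ$; then all but finitely many fillings are hyperbolic, and one is free to modify the link (add extra components, or change the concordances) to land in the hyperbolic range. This is essentially the content of \cite{ruberman:seifert}, so I would cite it and supply only the ``slight amplification'' that the normal generation statement survives — which it does, by the van Kampen computation above, since adding or modifying link components only enlarges the set of meridian generators, all of which remain normally generated by $\pi_1(M)$.

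The main obstacle is ensuring hyperbolicity of $N$ \emph{simultaneously} with the $\pi_1$-normal-generation property and invertibility, since the natural constructions that make $N$ hyperbolic (drilling/filling, Paoluzzi–Porti-style modifications) tend to introduce new $\pi_1$-generators; the key point to check is that every such new generator is a \emph{meridian} of the surgery link and hence conjugate into $\pi_1(M)$, so normal generation is preserved. A secondary technical point is verifying that the hyperbolic-Dehn-surgery deformation can be performed while keeping the cobordism invertible — but invertibility is a formal consequence of using invertible concordances (Definition~\ref{D:ds} and Lemma~\ref{L:ds}) and is unaffected by which particular doubly slice hyperbolic pattern is used, so this causes no real difficulty. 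I would therefore structure the proof as: (1) recall the construction of \cite{ruberman:seifert} giving invertible $X$ with hyperbolic $N$; (2) note invertibility is automatic from invertible concordances; (3) run van Kampen to see $\pi_1(M)$ normally generates $\pi_1(X)$, which is the promised amplification.
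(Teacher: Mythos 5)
Your proposal has a genuine gap, which you yourself flag but do not close. If you build $X$ via Lemma~\ref{L:ds} — replacing tubular neighborhoods of a link $L$ in $M$ with exteriors of doubly slice hyperbolic knots — then $N$ is a union of hyperbolic pieces glued along incompressible tori. Such a manifold has a nontrivial JSJ decomposition and is therefore \emph{not} hyperbolic, which is exactly what you need for Theorem~\ref{T:infinite} (finiteness of $\pi_0(\diff(N))$). You acknowledge this and propose to rescue it with a Thurston Dehn-surgery deformation so that $N$ arises as a generic filling of some hyperbolic $N^\circ$, but this does not mesh with the concordance machinery: the invertibility in Lemma~\ref{L:ds} is rigidly tied to gluing the longitude of $K_i$ to the meridian of $L_i$, and there is no free ``slope'' parameter to perturb without destroying the inverse cobordism $X'$. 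Your own worry — ``verifying that the hyperbolic-Dehn-surgery deformation can be performed while keeping the cobordism invertible'' — is precisely the unresolved obstruction, and it is not a ``secondary technical point.''

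The paper's proof avoids tori entirely by working with a different decomposition. The construction of \cite{ruberman:seifert} uses an invertible \emph{tangle} concordance: its complement $X_g$ is an invertible homology cobordism (rel boundary) from a genus-$g$ handlebody $H_g$ to the tangle complement $A_g$, and the fundamental group of $X_g$ is the normal closure of $\pi_1(H_g)$ because the new generators are tangle meridians. One then chooses a genus $g\geq 3$ Heegaard splitting $M = H_g \cup_\phi H_g$ and sets $X = X_g \cup_{\phi\times\id_I} X_g$. The boundary $N$ is $A_g$ glued to itself along a genus-$g$ surface, and it is a theorem of \cite{ruberman:seifert} that for $g\geq 3$ any such gluing is hyperbolic — there is no torus in sight. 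The normal generation statement then follows from the same van Kampen observation you made, transplanted to the tangle setting. So your instinct about where normal generation comes from (meridians conjugate into $\pi_1(M)$) is right, but the gluing must happen along a Heegaard surface, not along tori, for $N$ to be hyperbolic.
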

\begin{proof}
All but the last clause is Theorem 2.6 of~\cite{ruberman:seifert}; the reader should beware that the labeling of boundary components $M$ and $N$ in that paper is reversed relative to this one. To see the last clause, we review the construction, introducing some new notation to lessen the confusion.  The main ingredient is an invertible tangle concordance from the complement of a trivial $g$-string tangle in the $3$-ball to a certain $g$-string tangle $T_g$.  The complement of the trivial tangle is a genus-$g$ handlebody $H_g$, and so the complement $X_g$ of this concordance is an invertible homology cobordism (relative to the boundary) from $H_g$ to $A_g$, the complement of the tangle $T_g$.  The main new observation is that the fundamental group of $X_g$ is normally generated by the meridians of the concordance, which are the same as the meridians of the trivial tangle.  In other words, the fundamental group of $X_g$ is the normal closure of $\pi_1(H_g)$.

For $g \geq 3$, the manifold $A_g$ has the property that when it is glued to itself by any diffeomorphism of the boundary surface, the result is a hyperbolic manifold.  Now, given a $3$-manifold $M$, we choose a Heegaard splitting of genus at least $3$, so that $M = H_g \cup_\phi H_g$. Then 
$$
X = X_g \cup_{}\phi \times \id_I
$$
is the required invertible homology cobordism $X$. It is straightforward to see that $\pi_1(M)$  normally generates $\pi_1(X)$.
\end{proof}

Now we have the main result of this section.
\begin{theorem}\label{T:infinite}
Suppose that $W$ is a contractible manifold, and let $M= \partial W$.  Suppose that $f_j: M \to M$ are diffeomorphisms that extend to homeomorphisms $F_j: W \to W$, giving infinitely many smoothings of $W$ relative to the identity. Then there is a contractible manifold $V$ with infinitely many smoothings.
\end{theorem}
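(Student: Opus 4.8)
The plan is to run the same cork-twisting construction as in Theorem~\ref{T:general}, but using the invertible cobordism supplied by Theorem~\ref{T:invertible} in place of the one from Lemma~\ref{L:ds}, and then to invoke Lemma~\ref{L:finite} instead of Lemma~\ref{L:extend} to pass from relative to absolute smoothings. First I would apply Theorem~\ref{T:invertible} to $M = \partial W$ to obtain an invertible cobordism $X$ from $M$ to a hyperbolic $3$-manifold $N$ with $\pi_1(M)$ normally generating $\pi_1(X)$; since $W$ is contractible, $X$ is in fact an invertible \emph{homology} cobordism, so that $V := W \cup_M X$ is again a homology ball, and the normal-generation statement together with van Kampen shows $\pi_1(V) = 1$, hence $V$ is contractible. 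For each $j$, set $V_j := X \cup_{f_j} W$; this is the cork twist of $V$ along the embedded copy of $W$, and it carries the marking of $\partial V_j = N$ coming from the $N$ boundary component of $X$.

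Next I would show the $V_j$ are pairwise non-diffeomorphic relative to the marking $j : M \to \partial V$ (equivalently to the identity marking of $N$). This is exactly the argument around \eqref{E:twist}: a marking-preserving diffeomorphism $V_j \cong V_k$ glues to $\id_{\bar X}$ to give $\bar X \cup_N X \cup_{f_j} W \cong \bar X \cup_N X \cup_{f_k} W$; using $\bar X \cup_N X \cong M \times I$ rel boundary, and the fact that $f_j$, $f_k$ extend over $M \times I$, this produces a relative diffeomorphism of $(W, f_j)$ with $(W, f_k)$, contradicting the hypothesis that the $f_j$ give infinitely many distinct relative smoothings of $W$. So $V$ has infinitely many smoothings relative to $j$. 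To finish, I need $\pi_0(\diff(N))$ finite; since $N$ is a closed hyperbolic $3$-manifold, Mostow rigidity (via Gabai--Meyerhoff--Thurston, or Waldhausen in the Haken case, or just the fact that $\pi_0(\diff(N))$ injects into the finite group $\mathrm{Isom}(N)$) gives this immediately. Then Lemma~\ref{L:finite} yields infinitely many absolute smoothings of $V$.

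The main obstacle is making sure the relative-smoothing count survives, i.e.\ that distinct relative smoothings $(W, f_j)$ really do produce distinct relative smoothings $(V, V_j)$ and are not collapsed by the gluing. This is where invertibility of $X$ is essential: without it one cannot ``undo'' the cobordism to recover a diffeomorphism of $W$, and the argument of \eqref{E:twist} is precisely the mechanism that transfers non-equivalence upward. A secondary point to be careful about is that Theorem~\ref{T:invertible} only guarantees \emph{normal} generation of $\pi_1(X)$ by $\pi_1(M)$ (not an isomorphism as in Lemma~\ref{L:ds}); this is still enough, because $W$ being simply connected kills the normal closure of $\pi_1(M) = \pi_1(\partial W)$ in $\pi_1(X)$ when we form $W \cup_M X$, so $V$ is simply connected, and combined with $H_*(V) = H_*(\text{point})$ this makes $V$ contractible. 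Everything else — the homeomorphism type, the embeddings $V, V_j \subset W$ — is formal and parallels the proof of Theorem~\ref{T:general} verbatim.
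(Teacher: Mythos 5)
Your proposal matches the paper's own proof essentially step for step: apply Theorem~\ref{T:invertible} to get an invertible cobordism $X$ to a hyperbolic $N$, form $V_j = W \cup_{f_j} X$, use contractibility of $W$ plus normal generation of $\pi_1(X)$ by $\pi_1(M)$ to see the $V_j$ are contractible, transfer the relative non-equivalence via the inversion argument around \eqref{E:twist}, and finish with finiteness of $\pi_0(\diff(N))$ for hyperbolic $N$ together with Lemma~\ref{L:finite}. The only cosmetic difference is that you spell out the Mostow-rigidity mechanism where the paper simply cites Gabai and Gabai--Meyerhoff--Thurston; the substance is the same.
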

\begin{proof}
We follow the proof of Theorem~\ref{T:main}.  Let $X$ be the invertible homology cobordism from $M$ to a hyperbolic manifold $N$ as in Theorem~\ref{T:invertible}, and set $V = W \cup_M X$.  Then we can form manifolds 
$$
V_j = W \cup_{f_j} X.
$$ 
Since $W$ is contractible, and $\pi_1(M)$ normally generates $\pi_1(X)$, it follows that the $V_j$ are all simply-connected and hence contractible.  Since they have the same boundary, they are all homeomorphic, but we claim that infinitely many of them are absolutely distinct smooth manifolds.  

As in the proof of Theorem~\ref{T:main}, the invertibility of $X$ implies that the $V_j$ are distinct smooth manifolds, relative to a fixed identification of $\partial V_j$ with $N$. But since $N$ is a hyperbolic manifold, $\pi_0(\diff(N))$ is finite~\cite{gabai:rigidity,gabai-meyerhoff-thurston} (a thorough discussion of such issues may be found in~\cite{hong-mccullough:survey}).  By Lemma~\ref{L:finite}, infinitely many of the  $V_j$ are absolutely distinct.
\end{proof}
Although their proofs are similar, Theorems~\ref{T:infinite} and~\ref{T:general} are logically independent. That is because the manifold $N$ used in the proof of Theorem~\ref{T:infinite} may well have a non-trivial symmetry group. For instance, there may be some symmetries derived from the $g$-fold symmetry of the hyperbolic manifold $A_g$.

\section{Homotopy equivalence and homotopy equivalence relative to the boundary}\label{S:homotopy}
It is a standard fact that for any $n \geq 4$ there are pairs of $n$-manifolds with boundary that are homotopy equivalent, have the same boundary, but are not homotopy equivalent relative to the boundary. As a simple example in dimension $4$, one can remove a $4$-ball from each of the two inequivalent $2$-sphere bundles over $S^2$. Both manifolds are homotopy equivalent to $S^2 \vee S^2$, but the cohomology rings (relative to the boundary) are not isomorphic.  Crossing with spheres gives examples in arbitrarily high dimensions. Of course the above $4$ dimensional example arises because the manifolds have different intersection forms;  in the case of $4$-manifolds having the same intersection form in \cite{boyer:relative} another subtle obstruction was found, and in  \cite {fan-wei:thesis} it was shown to occur in concrete examples. Here we note that the `infection' technique of Section~\ref{S:construct} gives more such examples in a direct way.  
\begin{theorem}\label{T:plug}
Suppose that $W$ is a compact $4$-manifold with $\partial W = M$ and that $\phi: M \to M$ is a homeomorphism that does not extend to a self-homotopy equivalence of $W$. Then $W$ contains a pair of  $4$-manifolds $V$ and $V'$ homotopy equivalent to $W$ with $\partial V \cong \partial V'$, such that there is no relative homotopy equivalence $(V,\partial V) \overset{\simeq}{\longrightarrow} (V', \partial V')$.
\end{theorem}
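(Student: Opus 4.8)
The plan is to follow the construction used for Theorem~\ref{T:general}, but to track homotopy-equivalence relative to the boundary rather than diffeomorphism type. Starting from $(W,\phi)$ with $M=\partial W$, use Lemma~\ref{L:ds} with the invertible concordance $C$ from the unknot to $J=11n42$ along a link $L\subset M$, obtaining an invertible homology cobordism $X$ from $M$ to $N$. As in the proof of Theorem~\ref{T:general}, $\pi_1(M)\to\pi_1(X)$ is an isomorphism since $\pi_1(S^3\times I - C)\cong\Z$, so $V=W\cup_M X$ and $V'=X\cup_\phi W$ are both homotopy equivalent to $W$ (the cobordism $X$ is an $h$-cobordism-like collar from the point of view of homotopy type), and $\partial V = N = \partial V'$ with the identity marking. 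The homotopy equivalence $W\to V$ and $W\to V'$ come from the fact that $X$ deformation retracts onto $M$ up to the fundamental-group-preserving homology-cobordism property; more carefully, $V\simeq W$ because inclusion $W\hookrightarrow V$ induces isomorphisms on $\pi_1$ and on homology with all local coefficients, and both are simply-homotopy-finite, so Whitehead's theorem applies after checking the map is $2$-connected — this is the same argument already invoked for $V$ being contractible when $W$ is.

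The core of the argument is the analogue of the \eqref{E:twist} step. Suppose there were a relative homotopy equivalence $g\colon (V,\partial V)\to(V',\partial V')$. Using the invertibility of $X$, glue $g$ to the identity of $\bar X$ along $N$; since $\bar X\cup_N X\cong N\times I$ (relative to the identity on the boundary), and since $\phi$ extends over $N\times I$, one obtains a relative self-homotopy equivalence of $W$ that, composed with the collar, realizes the homeomorphism $\phi$ of $M$ as extending to a self-homotopy equivalence of $W$ — contradicting the hypothesis on $\phi$. The bookkeeping here is exactly parallel to the diffeomorphism case: cutting out the embedded $W\subset V$ and regluing by $\phi$ produces $V'$, so a relative homotopy equivalence $V\to V'$ would, after capping with $\bar X$ on both sides and using $X\cup_N\bar X\cong M\times I$, descend to a relative self-homotopy equivalence of $(W,M)$ whose boundary restriction is $\phi$.

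The main obstacle, and the point that needs care beyond the proof of Theorem~\ref{T:general}, is that a \emph{homotopy} equivalence does not cut and paste along $N$ as cleanly as a diffeomorphism: one cannot literally "glue" $g$ to $\id_{\bar X}$ unless $g$ restricted to $\partial V=N$ is the identity, which is guaranteed only up to homotopy, not on the nose. The fix is to first homotope $g$ (rel nothing) so that $g|_{\partial V}=\id_N$, which is possible precisely because $g$ is a \emph{relative} homotopy equivalence, hence restricts to a homotopy equivalence $N\to N$ that is homotopic to the identity in the relevant sense; then extend this homotopy over a collar of $N$ in $V$ to arrange $g=\id$ on that collar, and only then glue. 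One must also verify that $\phi$ extends to a self-homotopy equivalence of $N\times I$ and of $\bar X\cup_N X$ and of $X\cup_N\bar X\cong M\times I$ rel the appropriate boundary component — this is automatic since a product $M\times I$ admits the extension $\phi\times\id$, and the identifications $\bar X\cup_N X\cong N\times I$, $X\cup_N\bar X\cong M\times I$ are diffeomorphisms respecting the markings. Once these identifications are in hand, the contradiction with the non-extendability of $\phi$ over $W$ closes the argument, and the final sentence about embeddings $V\subset W$, $V'\subset W$ follows verbatim from the invertibility argument at the end of the proof of Theorem~\ref{T:general}.
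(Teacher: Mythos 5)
Your overall scaffolding (infection by a doubly-slice knot, invertibility of $X$, cap with $\bar X$ to descend to a relative self-homotopy equivalence of $(W,M)$ extending $\phi$) matches the paper. But there is a genuine gap at the step you flag as "the main obstacle," and your proposed fix is not correct.

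You claim that because $g\colon (V,\partial V)\to(V',\partial V')$ is a relative homotopy equivalence, the restriction $g|_N\colon N\to N$ is "homotopic to the identity in the relevant sense." This is false. A homotopy equivalence of pairs only forces $g|_N$ to be a self-homotopy equivalence of $N$; it need not be homotopic to $\id_N$. In fact, the Claim in the proof of Theorem~\ref{T:general} shows that $\pi_0(\diff(N))$ is an infinite group $\oplus_{i}(\Z\oplus\Z)$ generated by Dehn twists along the JSJ tori, so $N$ has plenty of self-maps (even homeomorphisms) that are not homotopic to the identity. If $g|_N$ were one of these, your cut-and-paste would silently change the marking on $M$ when you pass through $\bar X \cup_N X \cong N\times I$, and you could not conclude that the resulting self-homotopy equivalence of $W$ restricts to $\phi$ on $M$.

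The missing ingredient is exactly what the paper supplies: since $N$ is built by gluing hyperbolic pieces along incompressible tori, it is Haken, and Waldhausen's theorem guarantees that \emph{any} self-homotopy equivalence of $N$ is homotopic to a homeomorphism. Combined with the Claim from Theorem~\ref{T:general} (the mapping class group of $N$ is generated by twists along the JSJ tori, and each such twist extends over $X$ to a diffeomorphism isotopic to the identity on $M$), one can homotope $g$ so that $g|_N$ is a homeomorphism $h$, extend $h$ over $\bar X$ compatibly, and only then glue. This is strictly stronger than "homotope $g|_N$ to $\id_N$": it controls an arbitrary boundary self-homotopy equivalence, not just one already assumed to be in the trivial class. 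Without invoking Waldhausen plus the JSJ/symmetry analysis, the argument does not close.

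A secondary slip: you write "$\phi$ extends over $N\times I$," but $\phi$ is a homeomorphism of $M$, not $N$; the relevant product that $\phi$ extends over is $M\times I \cong X\cup_N \bar X$. This is minor and looks like a typo, but it sits at a point where the bookkeeping between $M$-side and $N$-side identifications genuinely matters.
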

The prototypical example would be to take $W = S^2 \times D^2$, and to let $\phi: S^2 \times S^1\to S^2 \times S^1 $ be the Gluck twist, i.e., rotate the $S^2$ once around an axis as you go around $S^1$. Then $\phi$ does not even extend over $W$ as a map.  Related examples are {\em plugs}, which are certain Stein manifolds satisfying the hypotheses of Theorem~\ref{T:plug} (as corks they can be used to construct closed exotic manifolds); these were first described in~\cite{akbulut-yasui:corks-plugs}.  A systematic investigation of such manifolds is the subject of in the Michigan State Ph.D. thesis of Wei Fan~\cite{fan-wei:thesis}.
\begin{proof}[Proof of Theorem~\ref{T:plug}]
We proceed as in the proof of Theorem~\ref{T:general}, starting with the manifold $W$, and adding an invertible cobordism $X$ to obtain a manifold $V$ with boundary $N$. By construction, $N$ is obtained by gluing hyperbolic manifolds along incompressible tori, and so in particular is a Haken manifold.  Waldhausen's classical results~\cite{waldhausen:sufficiently-large} say that  that any self-homotopy equivalence of $N$ is homotopic to a homeomorphism.   It follows that any self-homotopy equivalence of $N$ extends over the cobordism $X$ so that it is the identity on $M$. The rest of the proof is as before; we make $V'$ by cutting out $W$ from $V$ and regluing via $\phi$. A homotopy equivalence between $V$ and $V'$, relative to the boundary, could then be modified to produce an extension of $\phi$ to a self-homotopy equivalence of $W$, contradicting our choice of $W$ and $\phi$. 
\end{proof}
\begin{example}
The simplest example that we can construct starts with $W = S^2 \times D^2$, with $\phi$ being the Gluck twist.  Then $S^2 \times S^1$ is invertibly homology cobordant to $N$, which is $0$-framed surgery on the Kinoshita-Terasaka knot.   SnapPy tells us that  $N$ is hyperbolic and has trivial symmetry group.  Unfortunately, the methodology that shows these calculations to be rigorous does not apply to closed manifolds, so this example cannot (yet) be verified rigorously.  
\end{example} 

\begin{example}
In order to proceed as in Theorem~\ref{T:plug} we need a knot $\eta$ in $S^1 \times S^2$ whose complement is hyperbolic and has trivial symmetry group.  An example comes from the $2$-component link denoted $9^2_{34}$ in Rolfsen's table~\cite{rolfsen:knots}. Note that both components of this link are unknotted, so that $0$-framed surgery on either component produces a knot in $S^2 \times S^1$. The extensions of SnapPy described above certify that both such knots are hyperbolic and their complements have trivial symmetry group.  (We came to this link via~\cite{henry-weeks}, where the symmetry group of the link complement was computed to be trivial.)  The $4$-manifold $V$ constructed using one of these components is drawn below in Figure~\ref{plug1}; the result of the Gluck twist, $V'$, is drawn in the second Figure~\ref{plug2}.
\end{example}



\begin{figure}[ht]
\centering
\begin{minipage}{.52\textwidth}
  \centering
  \includegraphics[width=.8\linewidth]{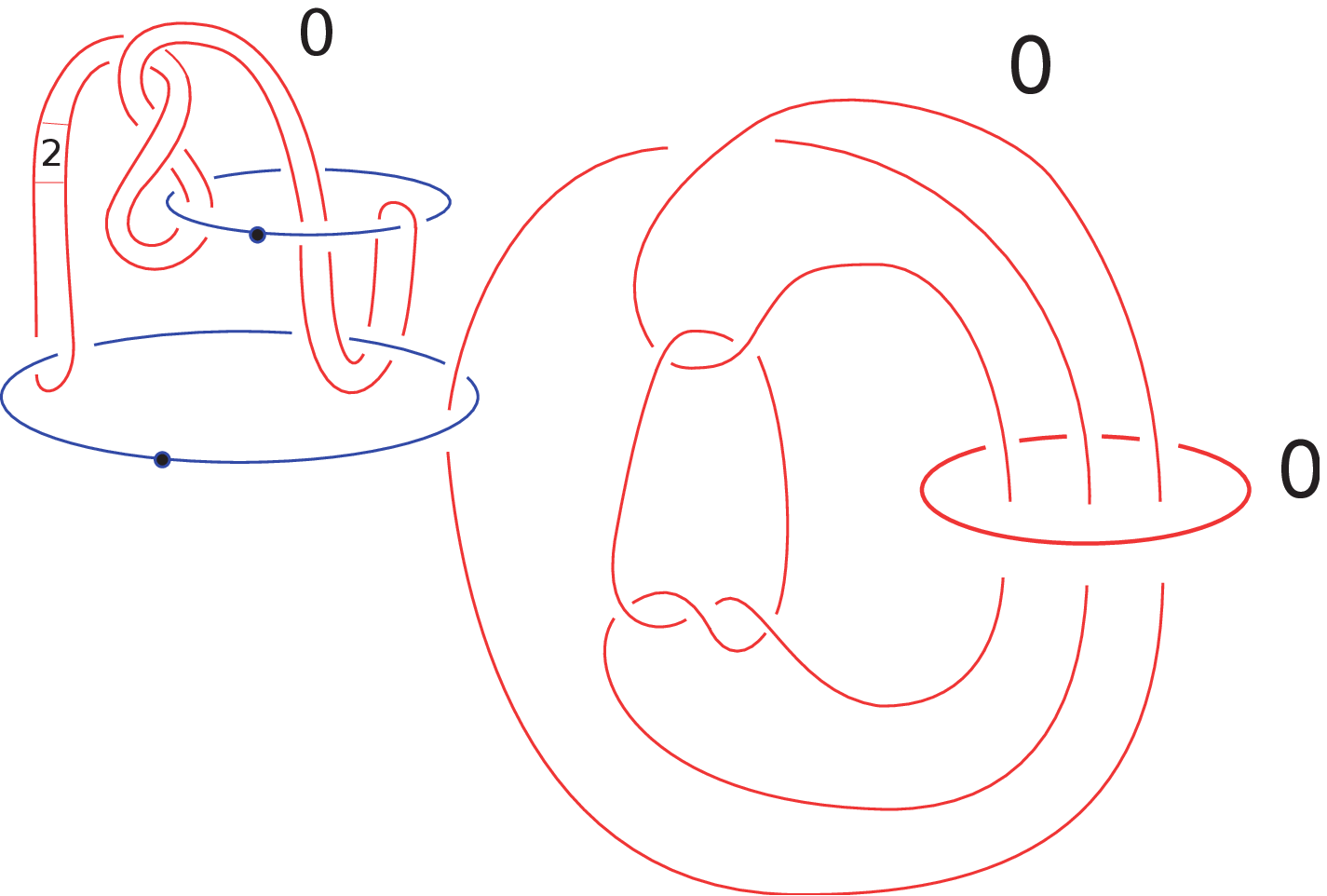}
  \caption{ $V = W \cup X$}
  \label{plug1}
\end{minipage}%
 \begin{minipage}{.52\textwidth}
  \includegraphics[width=.8\linewidth]{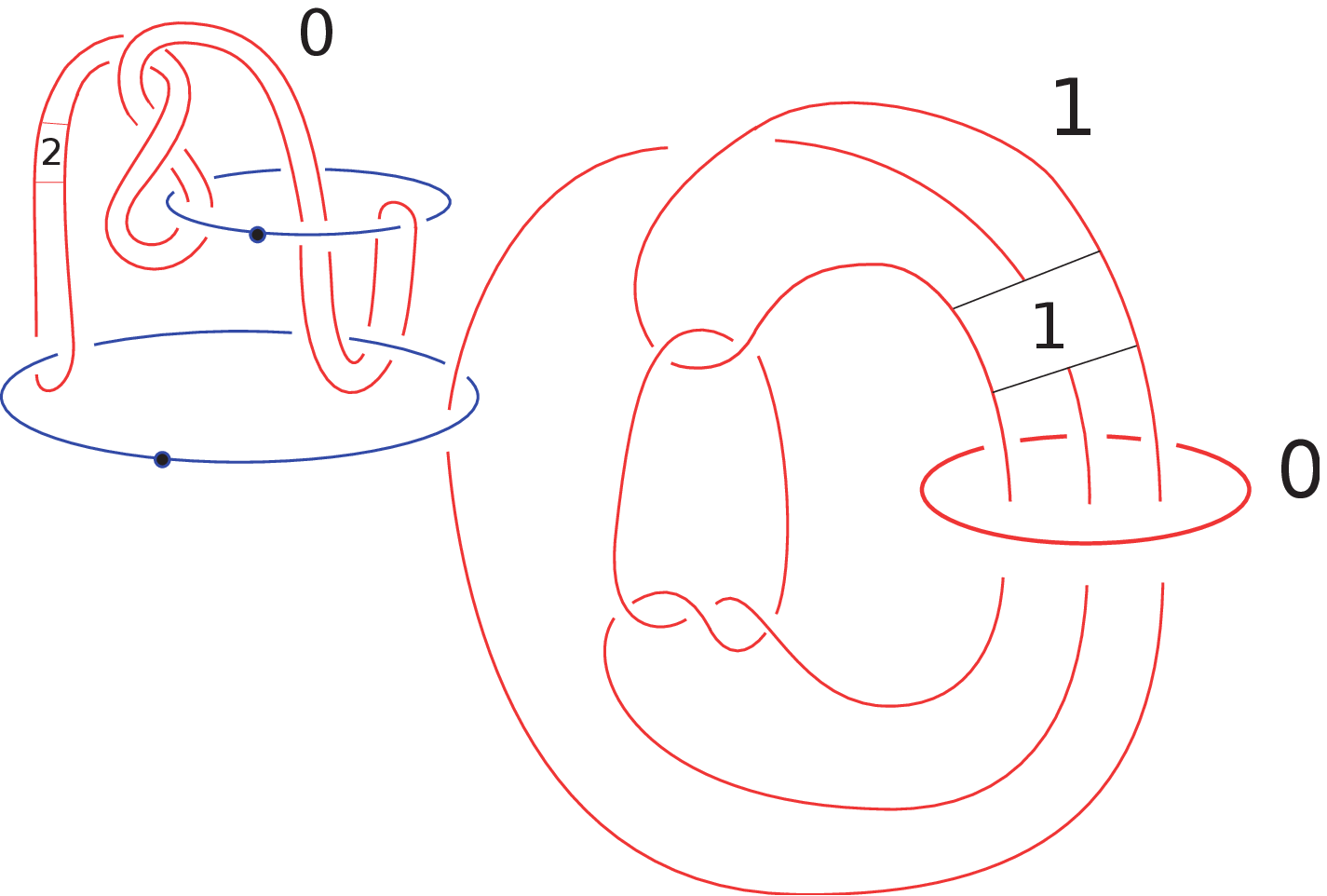}
  \caption{$V' = W \cup_\phi X$}
  \label{plug2}
\end{minipage}
\end{figure}

\newpage


\providecommand{\bysame}{\leavevmode\hbox to3em{\hrulefill}\thinspace}

\end{document}